\documentclass[11pt,a4paper,final]{amsart}
\usepackage[a4paper,left=30mm,right=30mm,top=30mm,bottom=30mm,marginpar=23mm]{geometry} 
\usepackage{amsmath}
\usepackage{amssymb}
\usepackage{amsthm}
\usepackage{amscd}
\usepackage[ansinew]{inputenc}
\usepackage{cite}
\usepackage{bbm}
\usepackage{color}
\usepackage[english=american]{csquotes}
\usepackage[final]{graphicx}
\usepackage{hyperref}
\usepackage{calc}
\usepackage{mathptmx}

\newenvironment{customthm}[1]
  {\innercustomthm}
  {\endinnercustomthm}

\linespread{1.1}

\numberwithin{equation}{section}

\newtheoremstyle{thmlemcorr}{10pt}{10pt}{\itshape}{}{\bfseries}{.}{10pt}{{\thmname{#1}\thmnumber{ #2}\thmnote{ (#3)}}}
\newtheoremstyle{thmlemcorr*}{10pt}{10pt}{\itshape}{}{\bfseries}{.}\newline{{\thmname{#1}\thmnumber{ #2}\thmnote{ (#3)}}}
\newtheoremstyle{remexample}{10pt}{10pt}{}{}{\bfseries}{.}{10pt}{{\thmname{#1}\thmnumber{ #2}\thmnote{ (#3)}}}

\theoremstyle{thmlemcorr}
\newtheorem{theorem}{Theorem}
\numberwithin{theorem}{section}
\newtheorem{lemma}[theorem]{Lemma}

\newtheorem{proposition}[theorem]{Proposition}

\newtheorem{definition}[theorem]{Definition}

\theoremstyle{thmlemcorr*}
\newtheorem{theorem*}{Theorem}
\newtheorem{lemma*}[theorem]{Lemma}
\newtheorem{corollary*}[theorem]{Corollary}
\newtheorem{proposition*}[theorem]{Proposition}
\newtheorem{problem*}[theorem]{Problem}
\newtheorem{conjecture*}[theorem]{Conjecture}
\newtheorem{definition*}[theorem]{Definition}

\theoremstyle{remexample}
\newtheorem{remark}[theorem]{Remark}


\newcommand{\Acal}{\mathcal{A}}

\newcommand{\Rcal}{\mathcal{R}}

\newcommand{\R}{\mathbb{R}}
\newcommand{\N}{\mathbb{N}}

\newcommand{\Ucal}{\mathcal{U}}

\newcommand{\setb}[2]{\bigl\{\, #1 \ \ \textup{\textbf{:}}\ \ #2 \,\bigr\}}

\newcommand{\Irm}{\mathrm{I}}

\renewcommand\appendix{\par
        \renewcommand\thesection{A}
        \renewcommand\thesubsection{A\arabic{subsection}}
        \renewcommand\thetable{A\arabic{table}}}

 
\def\XXint#1#2#3{{\setbox0=\hbox{$#1{#2#3}{\int}$} 
\vcenter{\hbox{$#2#3$}}\kern-.5\wd0}}


\newcommand{\W}{\mathrm{W}}
\newcommand{\LL}{\mathrm{L}}
\newcommand{\st}{\ensuremath{\mathrm{ \hspace*{1mm}\textup{\textbf{:}}\hspace*{1mm} }}}
\newcommand{\dx}{\, \mathrm{d}x}
\newcommand{\dy}{\, \mathrm{d}y}
\newcommand{\dt}{\, \mathrm{d}t}
\newcommand{\dv}[1]{\, \mathrm{d}#1}

\newcommand{\eps}{\varepsilon}
\renewcommand{\phi}{\varphi}

\newcommand{\varA}{\mathrm{Var}(\mathcal A)}

\hyphenation{minimiser mi-ni-mi-sers previous example a-ssump-tion co-rres-pon-ding boun-da-ry}

\begin{document}


\title[Conditions for strong local minimisers on non-smooth domains]{Necessary and sufficient conditions for the strong local minimality of ${\rm C}^1$ extremals on a class of non-smooth domains}

\author{Judith Campos Cordero}
\address{Department of Mathematics, Universidad Aut\'{o}noma Metropolitana--Iztapalapa,
Av. San Rafael Atlixco 186, 09340 Mexico City, Mexico}
\email{judith@ciencias.unam.mx}

\author{Konstantinos Koumatos}
\address{Department of Mathematics, University of Sussex, Pevensey 2 Building, Falmer, Brighton, BN1 9QH, UK}
\email{k.koumatos@sussex.ac.uk}


\hypersetup{
  pdfauthor = {},
  pdftitle = {...},
  pdfsubject = {},
  pdfkeywords = {}
}


\maketitle


\begin{abstract}

\noindent\textsc{}
Motivated by applications in materials science, a set of quasiconvexity at the boundary conditions is introduced for domains that are locally diffeomorphic to cones. These conditions are shown to be necessary for strong local minimisers in the vectorial Calculus of Variations and a quasiconvexity-based sufficiency theorem is established for ${\rm C}^1$ extremals defined on this class of non-smooth domains. The sufficiency result presented here thus extends the seminal theorem by Grabovsky \& Mengesha (2009), where smoothness assumptions are made on the boundary.\vspace{0.1cm}

\noindent\textsc{Keywords:} quasiconvexity at the boundary, cones, non-smooth domains, sufficient conditions, strong local minimisers\vspace{0.1cm}

\noindent\textsc{MSC (2010):} 35J50, 35J60, 49K10, 49K20\vspace{0.1cm}

\noindent\textsc{Date:} \today{}
\end{abstract}

\setcounter{tocdepth}{1} 

\tableofcontents

\section{Introduction}

The problem of finding necessary and sufficient conditions for a map to be a strong local minimiser of a given functional is a fundamental question in the Calculus of Variations. The scalar case was first solved by Weierstrass and, later, Hestenes  \cite{HestenesSuff} extended these results by considering minimisers that allow more than one independent variable. 

Regarding the vectorial case, the problem has received considerable attention over the past decades. In \cite{zhang92QCremarks} Zhang proved a sufficiency theorem in which the strong quasiconvexity of the integrand is used to prove that smooth solutions to the weak Euler-Lagrange equations are spatially-local minimisers. In other words, this means that minimality can be obtained when considering sufficiently  small domains. 

Ball conjectured in \cite{BallCoVMatSci} that a natural way to extend the Weierstrass theory to the vectorial case had to be based on the conditions of quasiconvexity in the interior and at the boundary, the latter having been introduced in \cite{BallMarsden} and shown to be a necessary condition for strong local minima; see also \cite{kruzikBQC,kruzikBNL,mielkeBQC} for other works related to the notion of quasiconvexity at the boundary. More precisely, Ball's conjecture states that under suitable quasiconvexity conditions, if a map is sufficiently smooth, satisfies the weak Euler-Lagrange equations and the strict positivity of the second variation, then it is a strong local minimiser. 

Further achievements for the vectorial case were obtained in  \cite{TaheriSufficiency}. In addition, Kristensen and Taheri showed that, under strong quasiconvexity and $p$-growth conditions, $\W^{1,p}$-local minimisers are \textit{partially regular}, i.e. of class ${\rm C}^{1,\alpha}$ almost everywhere in their domain. Their result extended the partial regularity that Evans had established for the first time for global minimisers under similar assumptions on the integrand \cite{Evanspr}. Furthermore, and in contrast to the previous achievements on the vectorial problem, Kristensen and Taheri modified the remarkable example by M\"uller and {\v{S}}ver{\'a}k in \cite{MyS} to construct a strongly quasiconvex integrand with strictly positive second variation, such that the corresponding weak Euler-Lagrange equations admit Lipschitz solutions that are not $\W^{1,p}$-local minimisers \cite{KristensenTaheri}. This made it clear that Lipschitz regularity of the extremal is not enough to ensure strong local minimality. Sz{\'e}kelyhidi extended further the example of Kristensen and Taheri to the case of polyconvex integrands \cite{Szekelyhidi}.

However, it was only until the seminal paper \cite{GM09} of Grabovsky and Mengesha that the conjecture of Ball was settled. They showed that, for domains of class ${\rm C}^1$, a strong version of the aforementioned quasiconvexity conditions is in fact sufficient for a ${\rm C}^1$ map to be a strong local minimiser of an integral functional under specific growth and coercivity assumptions on the integrand. 

The aim of the current work is to address the vectorial problem without the ${\rm C}^1$ regularity assumption on the domain but instead allowing certain types of singularities of the boundary. More specifically, we establish a quasiconvexity-based sufficiency result for minimisers defined on domains that are locally diffeomorphic to cones (see Definition \ref{def:domains} for a precise definition). 

Similarly to the work in \cite{GM09}, it is shown that when complemented with the satisfaction of the weak Euler-Lagrange equations and the (strong) positivity of the second variation, a strong version of the quasiconvexity in the interior and the suitably adapted conditions of quasiconvexity at the singular boundary are indeed sufficient for a ${\rm C}^1$ map to be a strong local minimiser. 

The need to extend the Weierstrass theory to domains of this type arises naturally in view of applications. For example, models based on energy minimisation, and consequently the techniques of the vectorial Calculus of Variations, have been very successful in materials science where typical specimens are polyhedral. Indeed, the work presented here has been largely motivated by \cite{BKARMA} where, in a simplified model, a set of quasiconvexity conditions at edges and corners of a (convex) polyhedral domain was employed to explain remarkable experimental observations in a shape-memory alloy (see \cite{icomat11}). In particular, it was shown that, in this simplified model, the quasiconvexity conditions hold in the interior and at edges, thus preventing the localised nucleation of the high temperature phase; in contrast, and consistent with observations, quasiconvexity was lost at certain corners allowing for nucleation. The sufficiency result presented in the current work could be used to strengthen the result of \cite{BKARMA} in a different modelling regime where variations that are not localised are also considered. This may indeed be an interesting direction to pursue. We remark that quasiconvexity conditions at points of the boundary with conical singularities were first discussed in \cite[Remark 2]{BallMarsden} but, to the best of the authors' knowledge, this idea had not been previously applied. 

The plan of the paper is as follows: in Section \ref{sec:definitions}, we state the general assumptions under which we establish our results. We further give all the required definitions on our domains as well as the generalised quasiconvexity conditions at the singular boundary. In Section \ref{sec:necessity}, we show that all these quasiconvexity at the boundary conditions are indeed necessary for a strong local minimiser and, in Section \ref{sec:sufficiency}, we state and prove the corresponding quasiconvexity-based sufficiency theorem which generalises the result of \cite{GM09} to the case of domains locally diffeomorphic to cones. Our strategy of proof shares many of the underlying ideas found in \cite{GM09}. However, in the present work we establish a G{\aa}rding inequality, that encapsulates the quasiconvexity conditions, and is crucially used to show that the concentrating part of a sequence of suitable variations does not lower the energy. On the other hand, in \cite{GM09} the behaviour of the energy on  this concentrating part of the variations is handled via a localization argument. 

Our proof is broadly motivated by the work in \cite{JCCVMO}, and the resulting G{\aa}rding inequality comes as a generalisation of Zhang's result on spatially-local minimisers. The proof of this generalisation, in the general case under consideration, becomes notationally involved and potentially difficult to follow its otherwise simple and clear ideas. Hence, for the convenience of the reader, the lemma is proved in the simpler case of the domain being itself a cone, though the result is stated for the general case. The interested reader is referred to Section~\ref{sec:AppendDiffeom} for a full proof of the lemma.


\section{Preliminaries \& Definitions}\label{sec:definitions}

Let $\Omega\subset\R^{d}$ be a bounded Lipschitz domain (i.e. open and connected) and consider an integral functional of the form
\begin{equation}\label{eq:functional_intro}
I(u) = \int_{\Omega}F(x,u(x),\nabla u(x))\dx,
\end{equation}
which is to be minimised over a set of admissible maps $\Acal$, where $u\colon\Omega\to\R^{N}$ and the function $F\colon\overline{\Omega}\times\R^N\times\R^{N\times d}\to\R$ are continuous. The Weierstrass problem consists in finding necessary and sufficient conditions for a map $u_0$ to be a strong local minimiser of the functional $I$ in $\Acal$, the space of admissible maps $\Acal$ being part of the problem.

For our purposes, we define
\begin{equation}\label{eq:admissible_maps}
\Acal:=\left\{u\in {\rm C}^1({\overline{\Omega}},\R^N) \st u(x)=\bar{u}(x)\mbox{ for all }x\in\Gamma_D\right\},
\end{equation}
where $\Gamma_D\subseteq \partial\Omega$ and $\bar u$ is continuously differentiable on some open set in $\R^d$ containing $\overline{\Gamma_D}$.

Note that if $u\in\Acal$, then $u(x)=\bar u(x)$ for all $x\in\overline{\Gamma_D}$. Hence, without loss of generality, we assume that $\Gamma_D$ is the interior of $\overline{\Gamma_D}$, relative to $\partial\Omega$. By defining $\Gamma_N:=\partial\Omega\setminus\overline{\Gamma_D}$, $\Gamma_N$ is a relatively open subset of $\partial\Omega$ and $\partial\Omega=\Gamma_D\cup\overline{\Gamma_N}$. Indeed, if $x\in \partial\Omega\setminus\overline{\Gamma_N}$, then $x$ has an open neighbourhood in $\partial\Omega$ that does not intersect $\Gamma_N$. Therefore, this neighbourhood must belong to the interior of $\overline{\Gamma_D}$, which is $\Gamma_D$. 
Additionally, we must require that $\Gamma_D$ has itself a Lipschitz boundary in $\partial\Omega$ in the sense of \cite[Definition 2.1]{Virginia}. This assumption allows us to make the identification (see \cite[Proposition 6.2]{Virginia})
\begin{align*}
&\overline{\{u\in C^\infty(\overline{\Omega},\R^N) \st u(x) = 0\mbox{ on }\Gamma_D\}^{\W^{1,p}}} \\
 =&\{u\in \W^{1,p}(\Omega,\R^N) \st Tu(x) = 0\mbox{ $\mathcal{H}^{d-1}$-  a.e. on }\Gamma_D\},
\end{align*}
where $T$ denotes the trace operator. 

We note that the subscripts $D$ and $N$ in $\Gamma_D$ and $\Gamma_N$ stand for Dirichlet and Neumann and are meant to help the reader associate the two parts of the boundary of $\Omega$ as the prescribed ($\Gamma_D$) and free ($\Gamma_N$) boundary, respectively.

\begin{definition}\label{def:slm_standard}
A map $u_0\in\Acal$ is a strong local minimiser of $I$ in $\Acal$ if there exists an $\varepsilon > 0$ such that, whenever $u\in\Acal$ and $||u - u_0||_\infty<\varepsilon$, it holds that $I(u)\geq I(u_0)$.
\end{definition}

Equivalently, because the uniform topology on the space of continuous functions is metrizable, the notion of strong local minimisers can be expressed in terms of sequences.
\begin{definition}
Let the space of variations $\varA$ be given by
\begin{equation}\label{eq:varA}
\varA = \left\{
\varphi\in {\rm C}^1(\overline{\Omega},\R^N)\st \varphi(x)=0 \mbox{ for all }x\mbox{ in }\Gamma_D
\right\}.
\end{equation}
We say that a sequence $\{\varphi_j\}\subset\varA$ is a strong variation if $\varphi_j\to 0$, as $j\to\infty$, uniformly in $x\in\Omega$. 
\end{definition}
Then, a map $u_0\in\Acal$ is a strong local minimiser of $I$ in $\Acal$ if, and only if, for every strong variation $\{\varphi_j\}$ there exists $J>0$ such that
\begin{equation}\label{eq:slm_variations}
I(u_0+\varphi_j)\geq I(u_0)\quad\mbox{for all }j\geq J.
\end{equation}

More generally, given an open set $\omega\subseteq\R^d$ such that $\omega\cap\Omega\neq\emptyset$, we consider the following space of variations defined in $\omega$.
\begin{equation*}
\mathrm{Var}(\omega,\R^N):=\left\{
\varphi\in {\rm C}^1(\overline{\omega},\R^N)\st \varphi(x)=0\mbox{ for all }x\mbox{ in } (\Gamma_D\cap{\overline{\omega}})\cup(\partial\omega\cap\Omega)
\right\}.
\end{equation*}
We note that $\varA=\mathrm{Var}(\Omega,\R^N)$.

Next, for $u_0\in\Acal$, let
\[
\Rcal(u_0) = \setb{(u_0(x),\nabla u_0(x))}{x\in\overline{\Omega}}.
\]
Then, in addition to the continuity of $F$, we assume that for some $p\in[2,\infty)$

\begin{itemize}
\item[\textbf{[H0]}] the partial derivatives of first and second order in $(y,z)$ of $F(x,y,z)$, denoted by $F_y$, $F_z$, $F_{yy}$, $F_{zz}$ and $F_{yz}$, exist and are continuous on $\overline{\Omega}\times\Ucal$ where $\Ucal$ is an open and bounded neighbourhood of $\Rcal(u_0)$ in $\R^N\times\R^{N\times d}$ for a given map $u_0\in\W^{1,p}(\Omega,\R^n)$;
\item[\textbf{[H1]}] (growth conditions) for all $x\in\overline{\Omega}$, $y\in\R^N$ and $z\in\R^{N\times d}$
\begin{align*}
&\mbox{(a)}\quad |F(x,y,z)|\leq C(y)(1+|z|^p)\\
&\mbox{(b)}\quad |F_z(x,y,z)|\leq C(y)(1+|z|^{p-1})\\
\mbox{and}\qquad& \\
&\mbox{(c)}\quad |F_y(x,y,z)|\leq C(y)(1+|z|^p),
\end{align*}
where $C(y)>0$ is in $L^\infty_{\tiny\rm loc}(\R^N)$ and depends on $F$.
\end{itemize}

\begin{remark}\label{rem:closureA}
Following \cite{GM09}, we remark that under the $p$-growth assumed in [H1] (a), the notion of strong or weak local minimisers for $I$ in $\mathcal A$ remains the same if we enlarge the space of admissible maps to
\[
\mathcal{A}^\prime:=\left\{u\in C({\overline{\Omega}},\R^N)\cap \W^{1,p}(\Omega,\R^N) \st u(x)=\bar{u}(x)\mbox{ for all }x\in\Gamma_D\right\}.
\]
This is due to the fact that $\Acal$ is dense in $\mathcal{A}^\prime$ under the topology generated by the norm \linebreak $\|u\|_{\infty}+\|\nabla u\|_{p}$ and the functional $I$ is finite and continuous on $\mathcal{A}^\prime$ under this topology.
In particular, this implies that given $\varphi\in\mathrm{Var}(\omega,\R^N)$, by extending $\varphi$ to $\Omega$ so that it takes the value $0$ in $\Omega\backslash\omega$, we may assume that $\varphi$ is in the closure of $\varA$ with respect to the above norm. Then, if $u_0$ is a strong local minimiser of $I$ in $\Acal$, the minimality condition
\[
I(u_0+ \varphi)\geq I(u_0)
\]
also holds for these maps provided that $\|\varphi\|_{\infty}$ is small enough. We will often make use of this remark without always appealing to the above argument.
\end{remark}

We use the notation $a\cdot b$ to denote the usual inner product on $\R^N$ or $\R^{N\times d}$, respectively. Note that, in both cases, we can write $a\cdot b={\rm tr}(ab^T)$. Also, we use the notation in \cite{GM09} whereby a single $x$ argument in $F$ or any of its derivatives, means that it is being evaluated at $(x,u_0(x),\nabla u_0(x))$.\vspace{0.5cm}

Under [H0] and [H1] (a) it is well known (see \cite{meyers65,BallMarsden,GM09})  that necessary conditions for a map $u_0\in\Acal$ to be a strong local minimiser of $I$ are the following:
\begin{itemize}
\item[(I)] satisfaction of the weak Euler--Lagrange equations, i.e.
\begin{equation*}\label{eq:WEL}
\int_{\Omega}[F_y(x)\cdot \varphi(x) + F_z(x)\cdot\nabla\varphi(x)]\dx = 0,
\end{equation*}
for all $\varphi\in\varA$;
\item[(II)] non-negativity of the second variation, i.e.
\begin{equation*}\label{eq:second_variation}
\int_{\Omega}[F_{yy}(x)\varphi(x)\cdot\varphi(x) + 2F_{yz}(x)\varphi(x)\cdot\nabla\varphi(x) + F_{zz}(x)\nabla\varphi(x)\cdot\nabla\varphi(x)]\dx\geq 0,
\end{equation*}
for all $\varphi\in\varA$;
\item[(III)] for all $x_0\in\overline{\Omega}$, $F(x_0,u_0(x_0),\cdot)$ is quasiconvex in the interior, i.e.
\begin{equation*}
\int_{B}\left[F(x_0,u_0(x_0),\nabla u_0(x_0) + \nabla\varphi(x)) - F(x_0)\right]\dx\geq0,
\end{equation*}
for all $\varphi\in {\rm C}^1_c(\overline{B},\R^N)$ where $B$ denotes the unit ball in $\R^d$;
\item[(IV)] for all $x_0\in\Gamma_N$ in the neighbourhood of which $\partial\Omega$ is ${\rm C}^1$, $F(x_0,u_0(x_0),\cdot)$ is quasiconvex at the (smooth) boundary, i.e.
\begin{equation*}\label{eq:qc_face}
\int_{B^-_{n(x_0)}}[F(x_0,u_0(x_0),\nabla u_0(x_0) + \nabla\varphi(x)) - F(x_0) - F_z(x_0)\cdot \nabla\varphi(x)]\dx\geq0
\end{equation*}
for all $\varphi\in V_{n(x_0)}$, where
\begin{equation*}\label{eq:Vn}
V_{n(x_0)} = \setb{\varphi\in {\rm C}^1(\overline{B^-_{n(x_0)}},\R^N)}{\varphi(x) = 0\mbox{ for all }x\in\partial B\cap\overline{B^-_{n(x_0)}}},
\end{equation*}
 $B^-_{n(x_0)}=\setb{x\in B}{x\cdot n(x_0) < 0}$, and $n(x_0)$ is the outward unit normal to $\partial\Omega$ at $x_0$.
\end{itemize}

As mentioned above, Grabovsky and Mengesha \cite{GM09} showed that, under additional hypotheses on $F$ and for $\Omega$ of class ${\rm C}^1$, a strengthened version of the above conditions is in fact sufficient for a map $u_0\in\Acal$ to be a strong local minimiser of $I$. In this work, we establish a sufficiency theorem for the case in which the domain is locally diffeomorphic to a cone and, in order to make this definition precise, we introduce some terminology.

\begin{definition}\label{def:cone}
A closed set $\mathcal{C}\subset \R^d$ is a cone with a vertex at $0\in \R^d$ if, and only if, whenever $x\in \mathcal{C}$ it also holds that
\[
tx \in \mathcal{C} \quad\mbox{for all }t > 0.
\]
Similarly, a closed set $\mathcal{C}\subset \R^d$ is a cone with a vertex at $x_0\in \R^d$ if, and only if, the set $\mathcal{C}-x_0$ is a cone with vertex at $0$. 
\end{definition}
\begin{remark}\label{remark:multiplevertices}
A cone may have more than one vertex: the set of points $\mathcal{C}:=\setb{(x,y,z)\in\R^3}{z\geq |y|}$  is a cone and any point on the $X$-coordinate axis is a vertex of $\mathcal{C}$.
\end{remark}

\begin{lemma}
Let $\mathcal{C}\subset \R^d$ be a set such that $0\in\partial \mathcal{C}$ and $\partial \mathcal{C}$ is the graph of a function $h:\R^{d-1}\to\R$ in an appropriate coordinate system. Then, $\mathcal{C}$ is a cone with vertex at $0\in \R^d$ if, and only if, $h$ is positively 1-homogeneous and, up to a change of coordinates, 
\[
\mathcal{C} = \{x=(x',x_d)\in \R^{d}\,:\, x_d \geq h(x')\},
\]
where $x'=(x_1,\ldots, x_{d-1})$.
\end{lemma}

For our purposes, we require that our domains are locally diffeomorphic to cones and we now make this definition precise.

\begin{definition}\label{def:domains}
We say that an open and bounded set $\Omega\subset\R^d$ is locally diffeomorphic to a cone if for every point $x_0\in\partial\Omega$ there exists a ball $B(x_0, r_{x_0})$ centred at $x_0$ of radius $r_{x_0}>0$ such that, up to a change of coordinate system, 
\[
\Omega\cap B(x_0,r_{x_0}) = x_0 + \{x=(x',x_d)\in B(0,r_{x_0})\,:\,g_d(x)\geq h(g'(x))\}
\]
where  $g:\overline{\Omega}\cap B(x_0,r_{x_0}) -x_0\to g(\Omega\cap B(x_0,r_{x_0}) -x_0)$ is a diffeomorphism, $g=(g_1,...,g_d)=(g',g_d)$ and $h:\R^{d-1}\to\R$ is a positively 1-homogeneous function. For simplicity, we assume $\det \nabla g(x)>0$, i.e., that $g$ is orientation-preserving. 

We also say that $\Omega\subset\R^d$ is locally a cone if for every point $x_0\in\partial\Omega$ there exist $r_{x_0}>0$ and a cone $\mathcal{C}$ with vertex at $x_0$ such that $\Omega\cap B(x_0,r_{x_0})=\mathcal{C}\cap B(x_0,r_{x_0})$ (so, the diffeomorphism $g$ that exists above is the identity in $\R^d$). 

In addition, if $\Omega\subseteq\R^d$ is locally diffeomorphic to a cone, we say that $\Omega$ has a conical boundary. 

\end{definition}

We emphasize that, for both the sufficiency and necessity results we require that our domains are locally diffeomorphic to cones. For the sufficiency result this comes from the uniform bounds required for the localization process in Theorem \ref{zhanglemma} (see \eqref{goodineq}). This appears in analogy to the localization procedure performed for the sufficiency theorem in \cite[Theorem 11.10]{GM09}, where uniform bounds between the boundary and its (flat) local blow-up are needed. For the necessity, the condition is required to build the appropriate comparison maps despite the fact that, essentially, the quasiconvexity condition arises as necessary by blowing up the minimality condition.


%
%
%

Also, as the remark below shows, the stated condition is strictly stronger than the condition that every point of the boundary admits a unique blow-up (which must then necessarily be a cone - see \cite[Proposition 2.1]{leonardi}).

\begin{remark}\label{rem:strongerbu}
We note that if a neighbourhood of $x_0\in\partial\Omega$ is diffeomorphic to a cone, i.e.
\[
\Omega\cap B(x_0,r_{x_0}) = x_0 + \{x=(x',x_d)\in B(0,r_{x_0})\,:\,g_d(x)\geq h(g'(x))\},
\]
then the blow-up cone at $x_0$ is precisely the cone $\mathcal{C}$ defined by
\[
\mathcal{C} = \{x=(x',x_d)\in \R^{d}\,:\, x_d \geq h(x')\}.
\]
Indeed, if the blow-up of $\Omega\cap B(x_0,r_{x_0})$ at $x_0$ exists, then it is the same as the blow-up set of $\Omega$ at $x_0$. A direct computation shows that
\begin{align*}
\frac{\Omega\cap B(x_0,r_{x_0}) - x_0}{\eps} & = \left\{\left(\frac{x'}{\eps},\frac{x_d}{\eps}\right)\,:\,g_d(x)\geq h(g'(x))\right\}\\
& = \left\{(y',y_d)\,:\,g_d(\eps y)\geq h(g'(\eps y))\right\}.
\end{align*}
Up to affine transformation, we may assume that $g(0)=0$ and $\nabla g(0) = {\rm I}_d$. Denoting by $\nabla_y$ the one-sided derivative in the direction $y$, we infer that
\begin{align*}
\frac{\Omega\cap B(x_0,r_{x_0}) - x_0}{\eps} & = \left\{(y',y_d)\,:\,\frac{g_d(\eps y) - g_d(0)}{\eps}\geq \frac{h(g'(\eps y)) - h(g'(0))}{\eps}\right\}\\
& \overset{\eps\to0^+}{\longrightarrow} \left\{(y',y_d)\,:\,\nabla g_d(0)\cdot y\geq \nabla_y [h\circ g'](0)\right\}\\
& = \left\{(y',y_d)\,:\, y_d\geq h(y')\right\},
\end{align*}
where the last equality follows from the 1-homogeneity of $h$ as
\begin{align*}
 \nabla_y [h\circ g'](0) & = \lim_{\eps\to0^+} \frac{h(g'(\eps y)) -  h(g'(0))}{\eps}  =  \lim_{\eps\to0^+} h\left(\frac{g'(\eps y) - g'(0)}{\eps}\right) = h(\nabla g'(0)y) = h(y').
\end{align*}
Hence, $\Omega$ being locally diffeomorphic to a cone is stronger than the condition that it admits unique blow-ups at every boundary point. In fact, it is strictly stronger as the following example shows.

Consider the set
\begin{equation*}
\Omega:=\left\{ (x,y)\in\R^2\,:\,x\in[-1,1]\backslash\{0\}\mbox{ and }-\sqrt{1-x^2}\leq y\leq x^2\sin\left(\frac{1}{x}\right)\right\}\bigcup\{(0,0)\}
\end{equation*}
Then, $(0,0)\in\partial\Omega$ and $\partial\Omega$ is Lipschitz. Also, $\Omega$ blows-up around $(0,0)$ into the lower half-space in $\R^2$, however, $\Omega$ is not diffeomorphic to a cone in any neighbourhood of $(0,0)$ as, locally, $\partial\Omega$ is the graph of the function $f(x)=x^2\sin\left(\frac{1}{x}\right)$, which is not ${\rm C}^1$.
\end{remark}




We may now define the quasiconvexity at the boundary conditions and recall the classical quasiconvexity in the interior:

\begin{definition}\label{def:qc}
Let $F\colon\R^{N\times d}\to\R$, $A\in\R^{N\times d}$ and denote by $B$ the unit ball in $\R^d$.
\begin{itemize}
\item We say that $F$ is quasiconvex in the interior at $A$ if
\begin{equation*} 
\int_{B}\left[F(A + \nabla\varphi(x)) - F(A)\right]\dx\geq0
\end{equation*}
for all $\varphi\in {\rm C}^1_c({B},\R^N)$.
\item Let $\mathcal{C}$ be a cone with vertex at $0$. We say that $F$ is quasiconvex at $A$ at the boundary 
with associated cone $\mathcal{C}$ if
\begin{equation*}\label{eq:qc_bd1}
\int_{B_{\mathcal{C}}}\left[F(A + \nabla\varphi(x)) - F(A)-F'(A)\cdot\nabla\varphi\right]\dx\geq0
\end{equation*}
for all $\varphi\in V_{\mathcal{C}}$. Here, 
$B_{\mathcal{C}}:=B\cap \mathcal{C}$ and 
 \begin{equation*}
V_{\mathcal{C}} :=\setb{\varphi\in {\rm C}^1(\overline{B_{\mathcal{C}}},\R^N)}{\varphi(x) = 0\mbox{ for all }x\in \partial B\cap\overline{\mathcal{C}}}.
\end{equation*}
\end{itemize}
\end{definition}



\begin{figure}[h]
\centering
\includegraphics[width=0.6\textwidth]{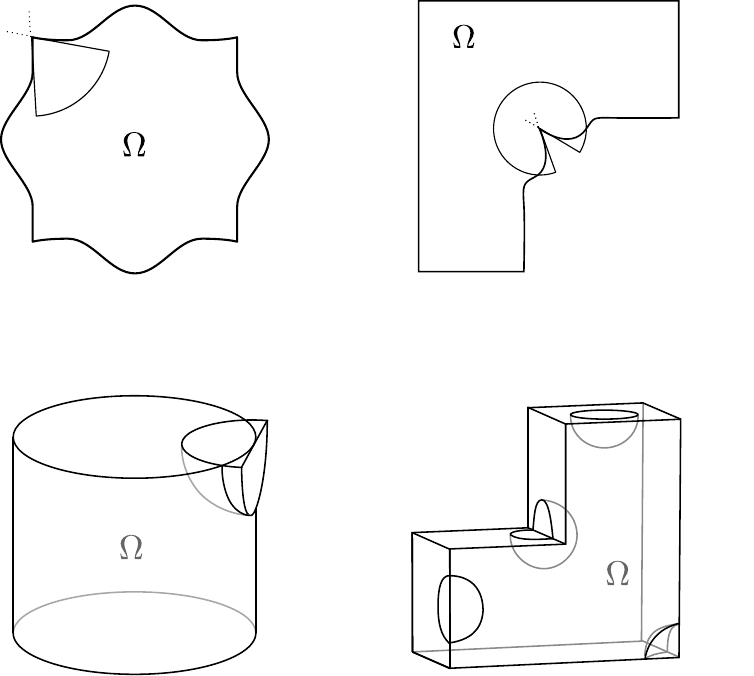}
\caption{A visualisation of the sets $B_{\mathcal{C}}$ in the definition of quasiconvexity at the boundary, translated and scaled onto the respective parts of $\Omega$. The corresponding space of test functions $V_{\mathcal{C}}$ simply amounts to smooth maps on the respective section of the unit ball which vanish on the part of the boundary that also belongs to the closed unit ball; that is, they vanish on the curved boundary whereas the conical part of the boundary is free.}
\label{fig:example1}
\end{figure}


\begin{remark}\label{remark:p-quasiconvexity}
Note that, by [H1], $F$ satisfies a $p$-growth and hence the quasiconvexity conditions in the interior and at the boundary in Definition \ref{def:qc} also hold for test functions $\varphi\in \overline{{\rm C}^1_{c}(B,\R^N)^{\W^{1,p}}}=\W^{1,p}_0(B,\R^N)$ and $\varphi\in \overline{V_{\mathcal{C}}\,^{\W^{1,p}}}$, respectively, where the closure is taken in the strong topology of $\W^{1,p}$. The proof in the case of the interior can be found in \cite[Proposition 2.4]{ballmurat} and the proof for the conditions at the boundary is essentially the same. This remark is crucial in the analysis that follows. As with Remark \ref{rem:closureA}, if $\varphi\in {\rm C}^1_c(\omega,\R^N)$ for some $\omega\subset B$, by extending $\varphi$ to $B$ such that it takes the value 0 outside $B\backslash\omega$, we obtain a map in $\W^{1,p}_0(B,\R^N)$ and the quasiconvexity condition still holds for these maps. The analogous situation holds for the case of quasiconvexity at the boundary and the space $\overline{V_{\mathcal{C}}\,^{\W^{1,p}}}$.
\end{remark}

It is also well-known (see e.g. \cite[Remark 5.1]{Giusti}) that quasiconvexity in the interior is independent of the particular choice of the set on which integration is 
performed (in our case the unit ball $B$). Indeed, if the condition holds for $B\subset\mathbb{R}^d$, it also holds for any bounded, open set $D\subset\mathbb{R}^d$ with $\mathcal{L}^d(\partial D) = 0$.
Similarly, quasiconvexity at the boundary also does not depend on the specific form of the sets $B_{\mathcal{C}}$ and we make this precise in the following definition and lemma, motivated by \cite{BallMarsden}. 

\begin{definition}\label{def:standard}
A standard conical-boundary region with associated cone $\mathcal{C}$ is a bounded Lipschitz domain $D\subset\R^d$ such that there exists $d_0\in \partial D$ satisfying:
\begin{itemize}
\item[(i)] $D-d_0$ is contained in $\mathcal{C}\subseteq\R^d$ with vertex $0$; 
\item[(ii)] for each $x\in \mathcal{C}$, the $1$-dimensional interior of the set
\[
 D\cap \setb{d_0+tx}{t>0}
\]
is non-empty.
\end{itemize}  
\end{definition}

\begin{lemma}\label{lemma:qcindependent}
Let $F\colon\R^{N\times d}\to\R$ and $A\in\R^{N\times d}$. If $F$ is quasiconvex at $A$ at the boundary with associated cone $\mathcal{C}$, then, for any standard conical-boundary region $D$ with associated cone $\mathcal{C}$, it holds that
\[
\int_{D}[F(A + \nabla\varphi(x)) - F(A) - F'(A)\cdot \nabla\varphi(x)]\dx\geq0,
\]
for all $\varphi\in V_D$, where
\[
V_D = \setb{\varphi\in {\rm C}^1(\overline{D},\R^N)}{\varphi(x) = 0\mbox{ for all }x\in\partial D\cap \left(d_0+ \mathrm{int}\,\mathcal{C}\right)}.
\]
\end{lemma}

\begin{proof}
Suppose that $F$ is quasiconvex at $A$ at the boundary with associated cone $\mathcal{C}$ and let $D$ be a corresponding standard conical-boundary region. By Definition \ref{def:standard}, there exists $d_0\in\partial D$ such that $D-d_0\subset \mathcal{C}$. Then, for $\varepsilon>0$ small enough, we can ensure that
\[
\varepsilon(D-d_0) \subset B\cap \mathcal{C}.
\]
Note that the \textit{conical} part of the boundary of $\varepsilon(D-d_0) $ is still contained in $\partial\mathcal{C}$.

Therefore, if $\varphi\in V_D$ and we define $\psi: B\cap \mathcal{C}\to \mathbb{R}^N$ by
\[
\psi(x) = \left\{\begin{array}{cc}
\varepsilon \varphi(d_0+ x/\varepsilon), & x\in \varepsilon \left(D - d_0\right)\\
0, & \mbox{otherwise},
\end{array}\right.
\]
then $\psi \in \overline{V_{\mathcal{C}}\,^{\W^{1,p}}}$. Hence, by Remark \ref{remark:p-quasiconvexity}, the quasiconvexity condition applies for $\psi$ to give
\begin{align*}
0 & \leq \int_{B\cap \mathcal{C}}[F(A+\nabla\psi(x)) - F(A) - F'(A)\cdot\nabla\psi(x)]\dx \\
 & = \int_{\varepsilon(D-d_0)} [F(A+\nabla\varphi(d_0+x/\eps))-F(A)-F'(A)\cdot\nabla\varphi(d_0+x/\eps)]\dx \\ 
&= \eps^d\int_D [F(A+\nabla\varphi(y))-F(A) -F'(A)\cdot\nabla\varphi(y)]\dy.
\end{align*}

\end{proof}


\section{Necessary conditions}\label{sec:necessity}
Equipped with all the required definitions, we proceed to discuss the necessity of quasiconvexity at the boundary for a map $u_0\in\Acal$ to be a strong local minimiser of the functional $I$. We recall that
\[
I(u) = \int_{\Omega}F(x,u(x),\nabla u(x))\dx
\]
and
\[
\Acal = \setb{u\in {\rm C}^1(\overline{\Omega},\R^N)}{u(x) = \bar{u}(x)\mbox{ for all } x\in \Gamma_D}.
\]
We begin by compiling all the necessary conditions in one theorem. We also recall that a single $x$ argument in $F$, or any of its derivatives, corresponds to the triple $(x,u_0(x),\nabla u_0(x))$.

\begin{theorem}\label{thm:necessity}
Let $\Omega\subset\R^d$ be locally diffeomorphic to a cone. For $u_0\in\Acal$, assume that $F\colon\overline{\Omega}\times\R^N\times\R^{N\times d}\to\R$ is continuous and satisfies [H0] and [H1]. 
If $u_0$ is a strong local minimiser of $I$ in $\Acal$, then the following hold:
\begin{itemize}
\item[$(I)$] $u_0$ satisfies the weak Euler--Lagrange equations, i.e.
\begin{equation*}
\int_{\Omega}[F_y(x)\cdot \varphi(x) + F_z(x)\cdot\nabla\varphi(x)]\dx = 0,
\end{equation*}
for all $\varphi\in\varA$;
\item[$(II)$] the second variation at $u_0$ is non-negative, i.e.~
\begin{equation*}
\int_{\Omega}[F_{yy}(x)\varphi(x)\cdot\varphi(x) + 2F_{yz}(x)\nabla\varphi(x)\cdot\varphi(x) + F_{zz}(x)\nabla\varphi(x)\cdot\nabla\varphi(x)]\dx\geq 0,
\end{equation*}
for all $\varphi\in\varA$;
\item[$(III)$] for all $x_0\in\overline{\Omega}$, $F(x_0,u_0(x_0),\cdot)$ is quasiconvex in the interior, i.e.
\begin{equation*}
\int_{B}[F(x_0,u_0(x_0),\nabla u_0(x_0) + \nabla\varphi(x)) - F(x_0)]\dx \geq 0,
\end{equation*}
for all $\varphi\in {\rm C}^1_c(B,\R^N)$;
\item[$(IV)$] for all $x_0\in\Gamma_N$, a neighbourhood of which is diffeomorphic to a cone $\mathcal{C}$, $F(x_0,u_0(x_0),\cdot)$ is quasiconvex at the boundary with associated cone $\mathcal{C}$, i.e.
\begin{equation*}
\int_{B_{\mathcal{C}}}[F(x_0,u_0(x_0),\nabla u_0(x_0) + \nabla\varphi(x)) - F(x_0) - F_z(x_0)\cdot \nabla \varphi(x)]\dx\geq 0,
\end{equation*}
for all $\varphi\in V_{\mathcal{C}}$.
\end{itemize}
\end{theorem}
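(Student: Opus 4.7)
Conditions $(I)$ and $(II)$ follow from the classical first- and second-variation arguments: for $\varphi\in\varA$ and $|t|$ small, $u_0+t\varphi\in\Acal$ with $\|t\varphi\|_\infty\to 0$, so strong local minimality gives $I(u_0+t\varphi)\geq I(u_0)$. Hypotheses [H0] and [H1] allow differentiation under the integral sign, so $t\mapsto I(u_0+t\varphi)$ is twice continuously differentiable near $0$, and the usual necessary conditions at a local minimum yield $(I)$ and $(II)$. For $(III)$ at an interior point $x_0\in\Omega$, I would apply the classical blow-up: for $\varphi\in C^1_c(B,\R^N)$ and $\eps>0$ small, set $\varphi_\eps(x):=\eps\varphi((x-x_0)/\eps)$, extended by zero outside $B(x_0,\eps)$. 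For $\eps$ small $B(x_0,\eps)\subset\Omega$, so $\varphi_\eps\in\varA$ with $\|\varphi_\eps\|_\infty\to 0$; strong local minimality, the change of variables $y=(x-x_0)/\eps$, division by $\eps^d$, and dominated convergence (justified by [H1] (a) together with $u_0\in C^1(\overline{\Omega})$) produce the quasiconvexity inequality. The boundary case $x_0\in\partial\Omega$ follows by approximation with interior points and continuity of $F$, $u_0$ and $\nabla u_0$ up to $\partial\Omega$.

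For $(IV)_k$, fix a $(d-k)$-facelike point $x_0\in\Gamma_N$ with diffeomorphism $g_{x_0}$ and normals $\{n_i(x_0)\}_{i=1}^l$, and let $\varphi\in V_{d-k,x_0}$. Since $\Gamma_N$ is relatively open in $\partial\Omega$, for $\eps$ small one has $B(x_0,\eps)\cap\partial\Omega\subset\Gamma_N$, so any variation supported in $B(x_0,\eps)$ automatically vanishes on $\Gamma_D$. My plan is to use $g_{x_0}$ to transport the blow-up to the polytope coordinates, where the boundary is genuinely flat. Writing $G:=\nabla g_{x_0}(x_0)$, I would set
\[
\varphi_\eps(x):=\eps\,\varphi\bigl(G^{-1}(g_{x_0}(x)-g_{x_0}(x_0))/\eps\bigr)
\]
on the $g_{x_0}$-pre-image of $g_{x_0}(x_0)+\eps\,G\,\overline{B_{d-k,x_0}}$, extended by zero. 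The identity $n_i(x_0)\,//\,G^Tm_i(g_{x_0}(x_0))$ ensures that $G\,B_{d-k,x_0}$ coincides with the local polytope region $\{y:y\cdot m_i<0\text{ for all }i\}$ (respectively the union, in the inwards case), so $\varphi_\eps$ has support inside $\overline{\Omega}$ and is admissible in the extended sense of Remark \ref{rem:closureA}. Applying strong local minimality, changing variables through $g_{x_0}$, dividing by $\eps^d$, and invoking dominated convergence as $\eps\to 0^+$ delivers the quasiconvexity inequality on $B_{d-k,x_0}$.

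The main obstacle is controlling the higher-order error between $g_{x_0}$ and its affine approximation. Because $g_{x_0}$ is only $C^1$, the gradient $\nabla\varphi_\eps(x)=\nabla\varphi(\cdot)\,G^{-1}\nabla g_{x_0}(x)$ involves $\nabla g_{x_0}$ rather than $G$ alone, and the support of $\varphi_\eps$ differs from the naive scaled cone $\eps\,\overline{B_{d-k,x_0}}+x_0$ by a set of measure $O(\eps^{d+1})$. Both discrepancies vanish in the limit by uniform continuity of $\nabla g_{x_0}$ near $x_0$, and Lemma \ref{lemma:qcindependent} absorbs any residual dependence on the precise shape of the test domain, since the elliptical boundary region $G\,\overline{B_{d-k,x_0}}$ that arises after the change of variables is a standard $(d-k)$-boundary region with the correct set of normals $\{m_i\}$. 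The same construction handles outwards- and inwards-pointing facelike boundaries uniformly, the only distinction being whether $B_{d-k,x_0}$ is described as an intersection or a union of half-spaces intersected with $B$.
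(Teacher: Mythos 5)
Your proposal is correct and follows essentially the same route as the paper: for $(IV)_k$ the construction $\varphi_\eps(x)=\eps\,\varphi\bigl(A^{-1}(g_{x_0}(x)-g_{x_0}(x_0))/\eps\bigr)$ with $A=\nabla g_{x_0}(x_0)$, the change of variables $y=A^{-1}(g_{x_0}(x)-g_{x_0}(x_0))/\eps$, division by $\eps^d$, and bounded convergence is exactly the paper's argument. The one step worth spelling out explicitly (which the paper does via the identity $(g_{x_0}(x_0)+\eps Ab)\cdot\cof A\,n_i(x_0)=g_{x_0}(x_0)\cdot\cof A\,n_i(x_0)$ for $b\in B\cap\overline{B_{d-k,x_0}}$) is that the flat piece $g_{x_0}(x_0)+\eps(AB\cap\overline{B_{d-k,x_0}})$ lies precisely on the relevant $(d-k)$-face of $\mathcal P_{x_0}$, so that $g_{x_0}^{-1}$ maps it into $\Gamma_N$ rather than into the interior of $\Omega$; your phrase that $A\,B_{d-k,x_0}$ ``coincides with'' the local polytope half-space region is a slight overstatement (it is an ellipsoidal boundary region contained in that intersection of half-spaces), but it does not affect the argument since the pullback lands exactly on $B_{d-k,x_0}$.
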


\begin{proof}[Proof of Theorem \ref{thm:necessity}] The proofs of $(I)$, $(II)$ are standard and can be found in e.g. \cite{dacorognabook}. 
The necessity of $(III)$ is essentially due to Meyers \cite{meyers65} (see also \cite{BallMarsden}). The proof of $(IV)$ with $\mathcal{C}$ a half space, amounting to the standard quasiconvexity at the boundary condition, can be found in Ball \& Marsden~\cite{BallMarsden}. Nevertheless, here, we give a variant of the proof which is also appropriate for proving the quasiconvexity at the singular points of the boundary.
To this end, let $x_0\in\Gamma_N$ and let $g$ be the respective diffeomorphism between $\Omega\cap B(x_0,r_{x_0})$ and a cone $\mathcal{C}\cap B$. Without loss of generality we assume that $\nabla g(x_0) = {\rm I}_d$. Also, let $\varphi\in V_{\mathcal{C}}$ and define
\[
u_\varepsilon(x) = \left\{\begin{array}{ll}
u_0(x) + \varepsilon\varphi\left(\frac{g(x) - g(x_0)}{\varepsilon}\right), & x\in g^{-1}(g(x_0)+\varepsilon B_{\mathcal{C}})\\
u_0(x), & \mbox{otherwise in $\Omega$.}
\end{array}\right.
\]
We remark that for $\varepsilon>0$ small enough
\begin{align*}
&g^{-1}(g(x_0)+\varepsilon B_{\mathcal{C}})\subset\Omega, \\
&g^{-1}(g(x_0)+\varepsilon (\partial B\cap\overline{B_{\mathcal{C}}}))\subset\Omega \mbox{ and} \\
&g^{-1}(g(x_0)+\varepsilon ( B\cap\partial B_\mathcal{C}))\subset \Gamma_N.
\end{align*}

Hence, the function $\psi_\eps = u_\eps - u_0$ lies in the closure of ${\rm Var}(\Acal)$ in the topology generated by the norm $\|u\|_\infty + \|\nabla u\|_p$ and 
$||\psi_\eps||_{\infty} \to 0$, as $\varepsilon\to 0$. By Remark \ref{rem:closureA} and the fact that $u_0$ is a strong local minimiser, we infer that
\begin{align*}
0 &\leq I(u_\varepsilon) - I(u_0).
\end{align*}
Also, since $u_0$ is a strong local minimiser, it satisfies the weak Euler-Lagrange equations in $(I)$ with $\psi_\eps$ as a test function, i.e.
\begin{equation*}
\int_{\Omega}[F_y(x)\cdot \psi_\eps(x) + F_z(x)\cdot\nabla\psi_\eps(x)]\dx = 0.
\end{equation*}
Consequently, the minimality condition results in
\[
0\leq \int_{g^{-1}(g(x_0)+\varepsilon B_{\mathcal{C}})}\left[F(x,u_\varepsilon(x),\nabla u_\varepsilon(x))  -F(x) - F_y(x)\cdot \psi_\eps(x) - F_z(x)\cdot\nabla\psi_\eps(x) \right]\dx.
\]
Changing variables to 
\[y_\varepsilon(x)= (g(x)-g(x_0))/\varepsilon
\]
with inverse $x_\varepsilon(y)=g^{-1}(g(x_0)+\varepsilon y)$ and $\nabla x_\varepsilon(y) = \varepsilon\nabla g^{-1}(g(x_0)+\varepsilon y)$ we deduce that
\begin{align*}
0& \leq \varepsilon^d\int_{B_{\mathcal{C}}}\left[F(x_\varepsilon(y), u_0(x_\varepsilon(y))+\varepsilon\varphi(y),\nabla u_0(x_\varepsilon(y))+\nabla\varphi(y)\nabla g(x_\varepsilon(y))) \right.\\
&\qquad\left. - F(x_\varepsilon(y)) - \eps F_y(x_\varepsilon(y))\cdot \varphi(y) - F_z(x_\varepsilon(y))\cdot\nabla\varphi(y)\nabla g(x_\varepsilon(y))\right]\det[\nabla g^{-1}(g(x_0)+\varepsilon  y)]\dy.
\end{align*}
Dividing by $\varepsilon^d$ and sending $\varepsilon\to 0$, bounded convergence implies that
\begin{align*}
0 & \leq
 \int_{B_{\mathcal{C}}}\left[F(x_0,u_0(x_0),\nabla u_0(x_0)+\nabla\varphi(y)) - F(x_0)\right]\det[\nabla g^{-1}(g(x_0))]\dy\\
& = \int_{B_{\mathcal{C}}}\left[F(x_0,u_0(x_0),\nabla u_0(x_0)+\nabla\varphi(y)) - F(x_0) - F_z(x_0)\cdot\nabla\varphi(y)\right]\dy,
\end{align*}
noting that $\nabla g^{-1}(g(x_0))=(\nabla g(x_0))^{-1} = {\rm I}_d$.
\end{proof}
We remark here that the assumption on the domains being locally diffeomorphic to a cone is used to construct appropriate comparison maps to obtain the quasiconvexity at the boundary as a necessary condition. However, having local $\W^{1,1}$-homeomorphisms between $\Omega$ and a cone would be enough to obtain the conclusion (see \cite{AmbDMchainr}). 


\section{Sufficient conditions}\label{sec:sufficiency}

In this section we prove our main result in the form of a quasiconvexity-based sufficiency theorem for a map $u_0\in\Acal$ to be a strong local minimiser of $I$ in $\Acal$. For this we require additional coercivity conditions on the integrand $F$ which we state here. We recall that $p\in[2,\infty)$.

\begin{itemize}
 \item[\textbf{[H2]}] (coercivity conditions) $F$ is bounded from below. If $p=2$, we assume that
\begin{equation*}
\int_{\Omega} F(x,u(x),\nabla u(x))\dx\geq c_2(r)||u||^2_{1,2} - c_1(r)
\end{equation*}
for all $u\in\Acal$ such that $||u||_{\infty}\leq r$, where $c_1(r)>0$ and $c_2(r)>0$ are locally bounded. If $p>2$, we assume that for all $\varphi\in\varA$ with $||\varphi||_\infty \leq r$,
\begin{equation*}
\int_{\Omega}[F(x,u_0(x)+\varphi(x),\nabla u_0(x)+\nabla\varphi(x)) - F(x)]\dx\geq c_1(r)||\nabla\varphi ||^p_{p} - c_2(r)||\nabla\varphi ||^2_{2}
\end{equation*}
for some $c_1(r)>0$, $c_2(r)>0$ which are locally bounded. Note that the latter condition need only apply to $u_0$.
\end{itemize}

Lastly, as in \cite{GM09}, we require an additional uniform continuity assumption:

\begin{itemize}
\item[\textbf{[UC]}] For every $\eps>0$ there exists $\delta>0$ such that, for every $z\in\R^{N\times d}$ and $x,x_0\in\overline{\Omega}$ with $|x-x_0|<\delta$, it holds that
\begin{equation*}
|F(x_0,u_0(x_0),\nabla u_0(x_0)+z)-F(x,u_0(x),\nabla u_0(x)+z)|<\varepsilon(1+|z|^p).
\end{equation*}
\end{itemize}

For brevity we also introduce the function $S$ below.

\begin{definition}
For $k\in\N$, let $S\colon\R^k\to\R$ denote the function
\[
S(\xi) = \left(|\xi|^2 + |\xi|^p\right)^{\frac{1}{2}}.
\] 
\end{definition}

We are now in a position to state our main theorem:

\begin{theorem}[Sufficiency Theorem]\label{thm:sufficiency}
Let $\Omega\subset\R^d$ be locally diffeomorphic to a cone. Let $u_0\in\Acal$ and assume that $F\colon\overline{\Omega}\times\R^N\times\R^{N\times d}\to\R$ is continuous and satisfies [H0]--[H2]. Further assume that for some $c_0>0$ the following hold:
\begin{itemize}
\item[$(I)$] $u_0$ satisfies the weak Euler--Lagrange equations, i.e., for all $\varphi\in\varA$,
\begin{equation*}
\int_{\Omega}[F_y(x)\cdot \varphi(x) + F_z(x)\cdot\nabla\varphi(x)]\dx = 0;
\end{equation*}
\item[$(II)$] the second variation at $u_0$ is strongly positive, i.e.,~ for all $\varphi\in\varA$,
\begin{equation*}
\int_{\Omega}[F_{yy}(x)\varphi(x)\cdot\varphi(x) + 2F_{yz}(x)\nabla\varphi(x)\cdot\varphi(x) + F_{zz}(x)\nabla\varphi(x)\cdot\nabla\varphi(x)]\dx\geq c_0 ||\nabla\varphi ||^2_2;
\end{equation*}
\item[$(III)$] for all $x_0\in\overline{\Omega}$, $F(x_0,u_0(x_0),\cdot)$ is strongly quasiconvex in the interior, i.e.,  for all \linebreak $\varphi\in~{\rm C}^1_c({B},\R^N)$,
\begin{equation*}
\int_{B}\left[F(x_0,u_0(x_0),\nabla u_0(x_0) + \nabla\varphi(x)) - F(x_0)\right] \dx\geq c_0 ||S(\nabla \varphi)||_2^2;
\end{equation*}
\item[$(IV)$] for all free-boundary points $x_0\in\Gamma_N$, $F(x_0,u_0(x_0),\cdot)$ is strongly quasiconvex at the boundary, i.e. for all $\varphi\in V_{\mathcal{C}}$, with $\mathcal{C}$ the cone associated to $x_0$ given by Definition \ref{def:domains},
\begin{equation*}
\int_{B_{\mathcal{C}}}\left[F(x_0,u_0(x_0),\nabla u_0(x_0) + \nabla\varphi(x)) - F(x_0)-F_z(x_0)\cdot \nabla\varphi \right]\dx\geq c_0 ||S(\nabla \varphi)||_2^2;
\end{equation*}
\item[$(V)$] $u_0$ satisfies [UC].
\end{itemize}
Then, $u_0$ is a strong local minimiser of $I$ in $\Acal$.
\end{theorem}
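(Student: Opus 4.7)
The plan is to argue by contradiction, following the general strategy of Grabovsky--Mengesha \cite{GM09} suitably adapted to the singular boundary. Suppose $u_0$ is not a strong local minimiser; then there is a sequence $\{\varphi_j\}\subset\varA$ with $\|\varphi_j\|_\infty\to 0$ and $I(u_0+\varphi_j)<I(u_0)$ for all $j$. The coercivity assumption [H2] forces $\{\nabla\varphi_j\}$ to be bounded in $L^p(\Omega,\R^{N\times d})$, and the contradiction is to be extracted through a local-to-global argument.

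The first main ingredient is a generalisation of Zhang's spatially-local minimality theorem: I would establish that there exist constants $\rho_0,c>0$ such that for every $x_0\in\overline{\Omega}$ and every $\rho\in(0,\rho_0)$, any $\psi\in\mathrm{Var}(\Omega\cap B(x_0,\rho),\R^N)$ with $\|\psi\|_\infty$ sufficiently small satisfies
\begin{equation*}
I(u_0+\psi)-I(u_0)\geq c\|S(\nabla\psi)\|_2^2.
\end{equation*}
This is proved by contradiction plus a blow-up/rescaling: after pulling back through the diffeomorphism $g_{x_0}$ and dilating by $\rho$, freezing the $(x,u_0,\nabla u_0)$-dependence at $x_0$ yields the frozen-coefficient functional governed by the strongly quasiconvex integrand $F(x_0,u_0(x_0),\nabla u_0(x_0)+\,\cdot\,)$. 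Depending on whether $x_0$ lies in $\Omega$, on the smooth part of $\Gamma_N$, or on a $(d-k)$-facelike portion of $\Gamma_N$, condition (III) or the appropriate $(IV)_k$ (combined with Lemma \ref{lemma:qcindependent} to exchange the reference region for the image of the rescaled neighbourhood) supplies the required inequality at the frozen level, while [UC] and the continuity of the $C^1$-data of $u_0$ bound the error from freezing.

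Equipped with this lemma, cover $\overline{\Omega}$ by finitely many balls $\{B(x_i,\rho_i)\}_{i=1}^M$ with $\rho_i<\rho_0$ and fix an associated smooth partition of unity $\{\eta_i\}$. To treat the sequence $\{\varphi_j\}$, split $\varphi_j=\varphi_j^{\mathrm{b}}+\varphi_j^{\mathrm{c}}$ through a Lipschitz truncation at an appropriately chosen level $t_j\to\infty$, with $\varphi_j^{\mathrm{b}}\in W^{1,\infty}$ bounded and $\varphi_j^{\mathrm{c}}$ supported on the \emph{bad set} $\{|\nabla\varphi_j|>t_j\}$, whose $d$-dimensional measure vanishes by the Chebyshev bound and the $L^p$-bound on $\nabla\varphi_j$. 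The bounded part $\varphi_j^{\mathrm{b}}$ is handled by Taylor expansion of $F$ around $(x,u_0,\nabla u_0)$: (I) annihilates the linear term, (II) furnishes $c_0\|\nabla\varphi_j^{\mathrm{b}}\|_2^2$ from the quadratic term, and the cubic remainder is controlled via $\|\varphi_j^{\mathrm{b}}\|_\infty\to 0$ together with the growth bounds in [H1]. The concentration part $\varphi_j^{\mathrm{c}}$ is localised through the partition of unity into pieces $\eta_i\varphi_j^{\mathrm{c}}\in\mathrm{Var}(\Omega\cap B(x_i,\rho_i),\R^N)$, to each of which the Zhang-type lemma assigns a lower bound by $c\|S(\nabla(\eta_i\varphi_j^{\mathrm{c}}))\|_2^2$.

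The main obstacle will be the coupling between the two decompositions and the estimation of cross-terms: the mixed contributions $\varphi_j^{\mathrm{b}}\cdot\varphi_j^{\mathrm{c}}$ (and the matching gradient products), together with the derivative terms $\varphi_j^{\mathrm{c}}\otimes\nabla\eta_i$ produced by the partition of unity, must be absorbed into the positive quantities $c_0\|\nabla\varphi_j^{\mathrm{b}}\|_2^2$ and $c\sum_i\|S(\nabla(\eta_i\varphi_j^{\mathrm{c}}))\|_2^2$. This relies on $\|\varphi_j^{\mathrm{c}}\|_\infty\to 0$, the $p$-equi-integrability of $\{|\nabla\varphi_j|^p\}$ and the vanishing measure of the bad set, and is the point where the calibration of $t_j$ and of the truncation construction becomes delicate. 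Once these errors are shown to be $o(\|\nabla\varphi_j\|_2^2+\|\nabla\varphi_j\|_p^p)$, summing over $i$ yields $I(u_0+\varphi_j)-I(u_0)\geq 0$ for $j$ large, contradicting the defining property of $\{\varphi_j\}$ and establishing that $u_0$ is a strong local minimiser.
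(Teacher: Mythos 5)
Your proposal reproduces the correct high-level skeleton -- contradiction hypothesis, a Zhang-type spatially-local minimality lemma deduced from $(III)$ and $(IV)_k$, and a split of the contradicting sequence into a regular part and a concentration part -- but it is missing the central mechanism of the argument, and the way you propose to combine the local estimates does not close.

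The critical omission is the \emph{normalisation} $\psi_k := \alpha_k^{-1}\varphi_k$ with $\alpha_k = \|\nabla\varphi_k\|_{L^2}$. The coercivity hypothesis [H2], combined with inequality~\eqref{eq8} of Step 1, forces $\|S(\nabla\varphi_k)\|_{L^2}\to 0$, hence $\alpha_k\to 0$ as well. Consequently \emph{every} term in your unnormalised splitting -- the quadratic gain $c_0\|\nabla\varphi_j^{\mathrm b}\|_2^2$ from the second variation, the $c\|S(\nabla(\eta_i\varphi_j^{\mathrm c}))\|_2^2$ gains from the local lemma, and all the cross-terms -- vanishes simultaneously and at an unknown relative rate. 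There is no fixed large scale against which the errors can be measured, so the claim that the cross-terms are $o(\|\nabla\varphi_j\|_2^2+\|\nabla\varphi_j\|_p^p)$ does not by itself yield nonnegativity of $I(u_0+\varphi_j)-I(u_0)$: the putative positive contribution is of the same vanishing order. After normalisation one instead has $\|\nabla\psi_k\|_{L^2}\equiv 1$, which is exactly the nonvanishing quantity the final contradiction $0<c_0/2\leq 0$ plays against.

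A second gap is the replacement of the Decomposition Lemma (Theorem~\ref{lemma:decomposition}) by a plain Lipschitz truncation. The decomposition used in the paper is a simultaneous $W^{1,2}/W^{1,p}$ splitting $\psi_k=\psi+g_k+b_k$ in which $|\nabla g_k|^2$ \emph{and} $|\eta_k\nabla g_k|^p$ are equi-integrable, $\nabla b_k\to 0$ in measure, and both pieces vanish on $\Gamma_D$ with the uniform $L^\infty$ smallness propagating. These properties are what make the difference function $f_k = \alpha_k^{-2}G(\cdot,\alpha_k\psi_k,\alpha_k\nabla\psi_k)-\alpha_k^{-2}G(\cdot,\alpha_k b_k,\alpha_k\nabla b_k)$ equi-integrable and allow passage to the Young-measure limit. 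A one-shot Lipschitz truncation of $\varphi_j$ at level $t_j$ does not deliver this bundle of properties, and in particular your appeal to ``$p$-equi-integrability of $\{|\nabla\varphi_j|^p\}$'' is unjustified -- the sequence has no reason to be $p$-equi-integrable, and obtaining an equi-integrable component is precisely what the decomposition lemma is for.

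Finally, you never explain how conditions $(II)$ and $(III)$/$(IV)_k$ interact to produce a contradiction. In the paper this happens via a Young-measure argument: Lemma~\ref{lemma:F''QC} shows that $z\mapsto F_{zz}(x)z\cdot z-2c_0|z-\nabla\psi(x)|^2$ is quasiconvex, Jensen's inequality for gradient Young measures turns the Young-measure average of the second variation's Hessian into a pointwise bound, and the strong positivity $(II)$ then forces $\psi=0$ and $\nu_x=\delta_0$, contradicting $\|\nabla\psi_k\|_{L^2}=1$. Absent both the normalisation and the Young-measure step, there is no clear path from your two local lower bounds (one from $(II)$ on the bounded part, one from the Zhang lemma on the concentration part) to a contradiction; the cross-terms between $\varphi_j^{\mathrm b}$ and $\varphi_j^{\mathrm c}$, and between the pieces $\eta_i\varphi_j^{\mathrm c}$, are not merely ``delicate'' but genuinely uncontrolled in your scheme. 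Incorporating the normalisation, the refined decomposition, and the Young-measure closure would convert your outline into essentially the paper's proof.
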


The proof of the Sufficiency Theorem is based on the G{\aa}rding inequality given by Theorem \ref{zhanglemma} below. In its original form due to Zhang \cite{zhang92QCremarks}, the result asserts that, under the quasiconvexity assumptions, any ${\rm C}^1$ solution of the Euler--Lagrange equations is a spatially-local minimiser. In the presence of lower order terms in $F$, we obtain a variant - equivalent to a G{\aa}rding inequality - appropriate for our purposes.
 
\begin{theorem}\label{zhanglemma}
Let $\Omega\subset\R^d$ be locally diffeomorphic to a cone. Let $u_0\in~{\rm C}^1(\overline{\Omega},\R^N)$ and assume that $F\colon\overline{\Omega}\times\R^N\times\R^{N\times d}\to\R$ is continuous and satisfies [H0]--[H2]. 
In the notation of Theorem \ref{thm:sufficiency}, suppose that conditions $(I)$, $(III)$ and $(IV)$ are satisfied. Then, there exist some $R>0$ and $\delta>0$ such that, denoting by $\Omega(x_0,R):=\Omega\cap B(x_0,R)$, the following hold:
\begin{itemize}
\item[(1)] for all $x_0\in\overline{\Omega}$
\begin{align}\label{eq:minimality}
&\underset{\Omega(x_0,R)}{\int}\left(\frac{c_0}{2}|S(\nabla\varphi(x))|^2 - C\, |\varphi(x)|^2\right)\dx\notag\\
\leq  & \underset{\Omega(x_0,R)}{\int}\left(F(x,u_0(x)+\varphi(x),\nabla u_0(x)+\nabla\varphi(x))-F(x) \right)\dx \tag{$\ast$}
\end{align}
for all $\varphi\in {\rm W}^{1,p}_0(\Omega(x_0,R),\mathbb{R}^N)$ with $\|\varphi\|_{\LL^\infty}<\delta$. Here, $c_0>0$ is the constant that appears in conditions (III) and (IV) and $C>0$ is a constant with $C=C(d,\Omega,\|u\|_\infty)$;
\item[(2)] for all $x_0\in\overline{\Omega}$ such that $\Omega(x_0,R)\cap\Gamma_N\neq \emptyset$ and $\Omega(x_0,R)$ is diffeomorphic to a cone $\mathcal{C}$, \eqref{eq:minimality} holds for every function $\varphi\in \overline{{\rm Var}(\Omega(x_0,R),\mathbb{R}^N)^{{\rm W}^{1,p}}}$ satisfying  $\|\varphi\|_{\LL^\infty}<\delta$, where the closure is taken in the strong topology of ${\rm W}^{1,p}(\Omega,\R^N)$.
\end{itemize}
\end{theorem}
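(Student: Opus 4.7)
The plan is to prove Theorem \ref{zhanglemma} by a blow-up contradiction argument in the spirit of Grabovsky--Mengesha, treating, for notational simplicity, the case in which $\Omega$ is itself a polytope so that no straightening diffeomorphism is required. Suppose the conclusion fails: then there exist sequences $x_n\in\overline{\Omega}$, $R_n\downarrow 0$, $\delta_n\downarrow 0$, and variations $\varphi_n$ on $\Omega_n:=\Omega(x_n,R_n)$ in the relevant class (i.e.\ $\W^{1,p}_0(\Omega_n,\R^N)$ for part $(1)$, or the $\W^{1,p}$-closure of $\mathrm{Var}(\Omega_n,\R^N)$ for part $(2)$), with $\|\varphi_n\|_\infty<\delta_n$ and such that
\[
\int_{\Omega_n}\bigl[F(x,u_0+\varphi_n,\nabla u_0+\nabla\varphi_n)-F(x)\bigr]\dx < \frac{c_0}{2}\int_{\Omega_n}\bigl[|S(\nabla\varphi_n)|^2-C|\varphi_n|^2\bigr]\dx.
\]
Passing to subsequences, I may assume $x_n\to x_\infty\in\overline{\Omega}$; in the setting of $(2)$, the polytopal structure of $\Omega$ and the assumption that $\Omega_n$ meets precisely one lowest-dimensional facelike piece guarantee that, for $n$ large, the relevant set of outward normals and the in/out type are constant, so that $x_\infty$ is $(d-k)$-facelike with the same associated normals.

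The second step is a coercivity reduction. Subtracting the weak Euler--Lagrange identity $(I)$ (applied to $\varphi_n$ extended by zero to $\Omega$, justified via Remark \ref{rem:closureA}) removes the first-order term, and a second-order Taylor expansion in $(y,z)$, controlled by $[\textbf{H0}]$--$[\textbf{H1}]$ together with $\|\varphi_n\|_\infty\to 0$, reduces the violation to
\[
\int_{\Omega_n}\bigl[F(x,u_0(x),\nabla u_0(x)+\nabla\varphi_n)-F(x)\bigr]\dx + o(1)\int_{\Omega_n}|S(\nabla\varphi_n)|^2\dx < \frac{c_0}{2}\int_{\Omega_n}|S(\nabla\varphi_n)|^2\dx,
\]
the lower-order $|\varphi_n|^2$ contribution being absorbed via Poincar\'e and $[\textbf{H2}]$. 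This yields the a priori bound $\int_{\Omega_n}|S(\nabla\varphi_n)|^2\dx\leq C R_n^d$, which is the natural scaling for a blow-up.

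I would then perform the blow-up: set $\psi_n(y):=\varphi_n(x_n+R_ny)/R_n$ on $D_n:=(\Omega_n-x_n)/R_n\subset B$. By the previous bound and a change of variables, $\nabla\psi_n$ is uniformly bounded in $\LL^p(B)$ after extension by zero, and (along a subsequence) $\psi_n\rightharpoonup\psi$ weakly in $\W^{1,p}$. In the interior case, $D_n\to B$ and $\psi\in\W^{1,p}_0(B,\R^N)$; in the facelike case of $(2)$, $D_n$ converges to a standard outwards or inwards $(d-k)$-boundary region in the sense preceding Lemma \ref{lemma:qcindependent}, with the normals of $x_\infty$, and $\psi$ belongs to the $\W^{1,p}$-closure of the corresponding $V_{d-k,x_\infty}$. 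Using $[\textbf{UC}]$ together with $[\textbf{H1}]$ to freeze the $x$- and $u_0(x)$-arguments of $F$ at $(x_\infty,u_0(x_\infty))$ with vanishing errors, the rescaled violation translates into a quantitative failure of strong quasiconvexity at $(x_\infty,u_0(x_\infty),\nabla u_0(x_\infty))$ tested against $\psi_n$, in direct contradiction with $(III)$ or $(IV)_k$, applicable in their extended $\W^{1,p}$-forms by Remark \ref{remark:p-quasiconvexity} and Lemma \ref{lemma:qcindependent}.

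The main obstacle is the identification carried out in the third step: in part $(2)$, one must verify that $\psi_n$, extended by zero onto any higher-dimensional facelike pieces of $\partial\Omega$ that $\Omega_n$ may also intersect, passes to a weak limit inside the $\W^{1,p}$-closure of $V_{d-k,x_\infty}$, and that the limiting region $D_\infty$ is exactly a standard $(d-k)$-boundary region with the correct set of normals. This hinges crucially on the uniqueness assumption for the lowest-dimensional facelike boundary, which rigidifies the limiting geometry, and on Lemma \ref{lemma:qcindependent}, which grants the freedom to reshape the test region for quasiconvexity at the $(d-k)$-facelike boundary. In the fully diffeomorphic case one additionally pre-composes $\varphi_n$ with $g_{x_n}^{-1}$ so that the straightened domain is polytopal, absorbing the $n$-dependent Jacobian $\nabla g_{x_n}$ into a further $o(1)$ error via its continuity and the smallness of $R_n$; this is the technical content deferred to Section \ref{sec:AppendDiffeom}.
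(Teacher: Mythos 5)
Your proposal takes a genuinely different route from the paper: you argue by contradiction and blow up, whereas the paper gives a direct, constructive estimate. The paper's argument applies Lemma~\ref{LemmagrowthG}~(b) to obtain a pointwise inequality of the form
\begin{equation*}
G(x_0,0,\nabla\varphi(x))-G(x,\varphi(x),\nabla\varphi(x))\leq C|\varphi(x)|^2+\tfrac{c_0}{2}|S(\nabla\varphi(x))|^2
\end{equation*}
for $|x-x_0|<R$ and $\|\varphi\|_\infty<\delta$, then invokes the strong quasiconvexity conditions $(III)$ or $(IV)_k$ directly on $\Omega(x_0,R)$ (legitimate by Lemma~\ref{lemma:qcindependent} and Remark~\ref{remark:p-quasiconvexity}), and closes the argument using the Euler--Lagrange identity. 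There is no rescaling, no sequence, and no limit to pass to.

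Your plan contains a concrete gap at the a priori bound $\int_{\Omega_n}|S(\nabla\varphi_n)|^2\dx\leq C R_n^d$, which you assert but do not derive. Nothing in the negation of the theorem's conclusion controls $\|\nabla\varphi_n\|_{\LL^p(\Omega_n)}$: the violating variations are free to oscillate arbitrarily fast at small amplitude. Coercivity [H2] is a global statement on $\Omega$ and, even after extending $\varphi_n$ by zero, at best yields an estimate of the form $(c_1-\tfrac{c_0}{2})\|\nabla\varphi_n\|^p_p\lesssim\|\nabla\varphi_n\|^2_2$ whose coefficient depends on the unrelated constants $c_0$ and $c_1$, so it provides neither a bound by $R_n^d$ nor even a sign. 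Without this bound, the rescaled sequence $\psi_n(y)=\varphi_n(x_n+R_n y)/R_n$ need not be bounded in $\W^{1,p}$ of the unit ball, and the compactness step underpinning your blow-up collapses. Moreover, the blow-up is superfluous: once you have performed the coefficient-freezing that produces your ``reduced violation'' (which is essentially what Lemma~\ref{LemmagrowthG}~(b) quantifies), applying the strong quasiconvexity $(III)$ or $(IV)_k$ directly on $\Omega_n$, via Lemma~\ref{lemma:qcindependent}, already forces $\int_{\Omega_n}|S(\nabla\varphi_n)|^2\dx=0$ and hence an immediate contradiction, with no passage to a rescaled limit and no limit domain to identify. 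You are also somewhat cavalier in absorbing the $|\varphi_n|^2$ term: in the paper this term is not made $o(1)\int|S(\nabla\varphi_n)|^2$ but is carried explicitly into the conclusion as the $C\,|\varphi(x)|^2$ term of \eqref{eq:minimality}, precisely because there is no a priori mechanism forcing it to be lower order uniformly over all admissible $\varphi$. I would recommend abandoning the blow-up and contradiction framework in favour of the direct pointwise freezing estimate, which is both shorter and avoids the unjustified compactness claim.
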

Before proceeding with the proof of this theorem, we state the following technical lemma regarding the growth conditions that the shift of the integrand $F$ satisfies. We postpone the proof of the lemma until the Appendix in Section \ref{Appendix}.

\begin{lemma} \label{LemmagrowthG}
Let $\Omega\subset\R^d$ be a Lipschitz domain. Assume further that $F\colon\overline{\Omega}\times \R^N\times \R^{N\times d}\rightarrow\R$ is a continuous integrand, $u_0\in {\rm C}^1(\overline{\Omega},\R^N)$ and that [H0], [H1] and [UC] hold. 
Define the function $G\colon\overline{\Omega}\times \R^N\times \R^{N\times d}\rightarrow\R$  by 
\begin{align*}
G(x,y,z):=&F(x,u_0(x)+y,\nabla u_0(x)+z)-F(x) -F_y(x)\cdot y-F_z(x)\cdot z\notag\\
=&\int_0^1 (1-t)L(x, u_0(x)+ty,\nabla u_0(x)+tz)[(y,z),(y,z)],
\end{align*}
where the bilinear form $L(x,v,w)$ is given by 
\begin{align*}
L(x,v,w)[(y,z),(\hat{y},\hat{z})]:= &F_{yy}(x,v,w)y\cdot \hat y+ F_{yz}(x,v,w)y\cdot z+F_{yz}(x,v,w)\hat{y}\cdot \hat{z} \\&+F_{zz}(x,v,w)z\cdot \hat{z}.
\end{align*}

The following hold:

\begin{itemize}
\item[(a)] for each $x\in\overline{\Omega}$, $y,\hat y\in\R^N$ and $z,\hat z\in\R^{N\times d}$ and for some locally bounded function $C(y,\hat y)$ on $\R^{2N}$, 
\[
|G(x,y,z) - G(x,\hat y,\hat z)|\leq C(y,\hat y)\left(A_{p-1}(y,z,\hat y,\hat z)|z-\hat z| + A_p(y,z,\hat y,\hat z)|y-\hat y|\right),
\]
where
\[
A_p(y,z,\hat y,\hat z) = |y|+|\hat y|+|z|+|\hat z|+|z|^p+|\hat z|^p.
\]
In particular, for some locally bounded function $C(y)$,
\[
|G(x,y,z)|\leq  C(y)\left(|S(y)|^2+|S(z)|^2\right);
\] 
\item[(b)]  for every $\varepsilon>0$ there exist $R=R(\varepsilon)>0$ and $\delta=\delta(\varepsilon)>0$ such that, for all $x_0,x\in\overline{\Omega}$, $z\in\R^{N\times d}$ and $J\in\R^{d\times d}$, if $|x-x_0|<R$,  $w=z\, J$ and $|y|+|J-\mathrm{I}_d|<\delta$, then
\begin{equation}\label{quaderror}
|G(x_0,0,w)-G(x,y,z)|\det J||<c|y|^2|\det J|+\varepsilon |S(z)|^2|\det J|
\end{equation}
for some constant $c>0$ where $\mathrm{I}_d$ denotes the $d\times d$ identity matrix;
\item[(c)] for every $\varepsilon>0$ there exists $\delta=\delta(\varepsilon)>0$ such that, for  all $z\in\R^{N\times d}$ and $J\in\R^{d\times d}$, if  $w=z\, J$ and $|J-\mathrm{I}_d|<\delta$, then 
\begin{align}
c_0\left||S(w)|^2-|S(z)|^2|\det J|\right|< & \,{\varepsilon} |S(z)|^2.\label{auxineq}
\end{align}
 \end{itemize}
\end{lemma}

We next proceed to prove Theorem \ref{zhanglemma} for the case in which $\Omega$ is locally a cone, with the purpose of presenting more clearly the ideas behind this result. The reader is referred to Section \ref{sec:AppendDiffeom} for the more technical part involving domains locally diffeomorphic to a cone:

\begin{proof}[Proof of Theorem \ref{zhanglemma} (when $\Omega$ is locally a cone)]
Take $J=\mathrm{I}_d$ and $\varepsilon=\frac{c_0}{2}$ in Lemma \ref{LemmagrowthG}, with $c_0>0$ as in assumptions $(III)$ and $(IV)$. Then, there exist $R>0$ and $\delta>0$ such that for any $x_0 \in\overline{\Omega}$, $\varphi\in\overline{\mathrm{Var}(\Omega(x_0,R),\R^N)^{\W^{1,p}}}$ with $\|\varphi\|_{\LL^\infty}<\delta$, we can ensure that
\begin{equation}\label{eq:zhangproof0}
G(x_0,0,\nabla\varphi(x))-G(x,\varphi(x),\nabla\varphi(x))\leq C|\varphi(x)|^2+\frac{c_0}{2}|S(\nabla\varphi(x))|^2
\end{equation}
for every $x\in \Omega(x_0,R)$. We remark here that $J$ plays the role of the Jacobian of the diffeomorphism locally mapping $\Omega$ into a cone. Hence, if $\Omega$ is locally a cone $J=\mathrm{I}_d$.

It is now straightforward to conclude the proof. Note that for $x_0\in\Omega$ or $x_0\in\Gamma_D$ with $\Omega(x_0,R)\cap\Gamma_N=\emptyset$, the quasiconvexity condition $(III)$ remains valid if $B$ is replaced by $\Omega(x_0,R)$. Similarly, if $\Omega(x_0,R)\cap\Gamma_N\neq\emptyset$ (in particular, if $x_0\in\Gamma_N$) then, by assumption,
\[
\Omega(x_0,R) = \mathcal{C}\cap B(x_0,R)
\]
for some cone $\mathcal{C}$ and Lemma \ref{lemma:qcindependent} now states that the quasiconvexity condition $(IV)$ remains valid for $\Omega(x_0,R)$. Of course, this is also true for the strong quasiconvexity condition at hand. Then, $\varphi\in\overline{{\rm Var}(\Omega(x_0,R),\R^N)^{\W^{1,p}}}$ is an appropriate test function by Remark \ref{remark:p-quasiconvexity} and we obtain that

\begin{align*}
c_0\underset{\Omega(x_0,R)}{\int}|S(\nabla\varphi(x))|^2\dx\leq & \underset{\Omega(x_0,R)}{\int}G(x_0,0,\nabla\varphi(x))\dx\notag\\
\leq& \underset{\Omega(x_0,R)}{\int}\left(G(x,\varphi(x),\nabla\varphi(x))+\frac{c_0}{2}|S(\nabla\varphi(x))|^2 \right)\dx \\ 
&+C \underset{\Omega(x_0,R)}{\int}|\varphi(x)|^2\dx.
\end{align*}
Since $u_0$ satisfies the Euler-Lagrange equations, we deduce that 
\begin{align*}
&\frac{c_0}{2}\underset{\Omega(x_0,R)}{\int}\left(|S(\nabla\varphi(x))|^2-C|\varphi(x)|^2\right)\dx\\
\leq &  \underset{\Omega(x_0,R)}{\int}\left(F(x,u_0(x)+\varphi(x),\nabla u_0(x)+\nabla\varphi(x))-F(x) \right)\dx,
\end{align*}
which is the required inequality.
\end{proof}

\begin{remark}\label{remark:cubes}
Let $\Omega_Q(x_0, R) := \Omega\cap Q(x_0, R)$, where $Q(x_0, R)$ is a cube centred at $x_0$, with sides parallel to the coordinate axes and side length $2R$. Then, it is easy to see that, for a function $\varphi\in \overline{{\rm Var}(\Omega_Q (x_0, R),\R^N)^{\W^{1,p}}}$, we can assign the value of $0$ in $\Omega(x_0, 2R)\setminus\Omega_Q(x_0, R)$ and hence assume that $\varphi\in \overline{{\rm Var}(\Omega(x_0, 2R),\R^N)^{\W^{1,p}}}$. Therefore, Theorem \ref{zhanglemma} still remains valid if we exchange $\Omega(x_0,R)$ by $\Omega_Q(x_0,R)$ in the statement.
\end{remark} 

The following result provides a global estimate that derives from the G{\aa}rding inequality in Theorem \ref{zhanglemma}. 

\begin{proposition}\label{global_garding}
Let $(h_k)\subseteq \overline{{\rm Var}(\Omega,\R^N)^{\W^{1,p}}}\cap \LL^{\infty}(\Omega,\R^N)$ and $(a_k)\subseteq\R$ be  sequences such that 
\begin{itemize}
\item $\| h_k\|_{\LL^\infty}\overset{k\rightarrow\infty}{\longrightarrow}0$;
\item $a_k^{-1} S( h_k) \overset{k\rightarrow\infty}{\longrightarrow}0$  in $\LL^2(\Omega)$;
\item $a_k^{-1} S(\nabla h_k)$ is bounded in $\LL^2(\Omega)$. 
\end{itemize} 
Then, 
\begin{align*}
\liminf_{k\rightarrow\infty}\frac{c_0}{2}a_k^{-2}\underset{\Omega}{\int}|S(\nabla h_k(x))|^2\dx 
\leq &\liminf_{k\rightarrow\infty}\,a_k^{-2}\underset{\Omega}{\int}G(x,h_k(x),\nabla h_k(x))\dx.
\end{align*}
\end{proposition} 
\begin{proof}
\noindent\emph{\bf Step 1.} The first step will be to establish that we can cover $\Omega$ by a finite sequence of rectangles $(\mathcal{R}_j)_{j\in J}$, on which we can apply the G{\aa}rding inequality from Theorem \ref{zhanglemma} and such that their interiors are pairwise disjoint. Furthermore, we construct $(\mathcal{R}_j)$ in such a way that, for a given Radon measure $\mu$ on $\R^d$, 
\begin{equation}\label{eq:mu_sing0}
\mu(\partial\mathcal{R}_j)=0.
\end{equation}

Let $R>0$ as in Theorem \ref{zhanglemma}. Since $\Omega$ is locally diffeomorphic to a cone, for each $x\in\overline{\Omega}$ we may find a cube $Q(x,r_x)$ of side-length $r_x<\frac{R}{2}$, with sides parallel to the coordinate axes, such that the diffeomorphism from Definition \ref{def:domains} exists for $Q(x,2r_x)$ (note the doubling of the side-length of the cubes). This is a cover for $\overline{\Omega}$ and, by compactness, we may extract a finite subcover with the same property. 

We may split this finite cover into rectangles with pairwise disjoint interiors. Indeed, to achieve this, we can extend the faces of the cubes in the finite cover and then take the rectangles that result enclosed within these extensions, as in Figure \ref{fig:cover}. Note that, if we dilate  by a factor of two the rectangles  intersecting $\partial\Omega$, the dilation being with respect to their center, in each of these dilations there will also be a diffeomorphism taking that portion of $\Omega$ into a cone. This follows from the fact that such a diffeomorphism exists for all the cubes of the form $Q(x,2r_x)$ such that they intersect $\partial\Omega$.  

\begin{figure}[h]
\centering
\includegraphics[width=1.0\textwidth]{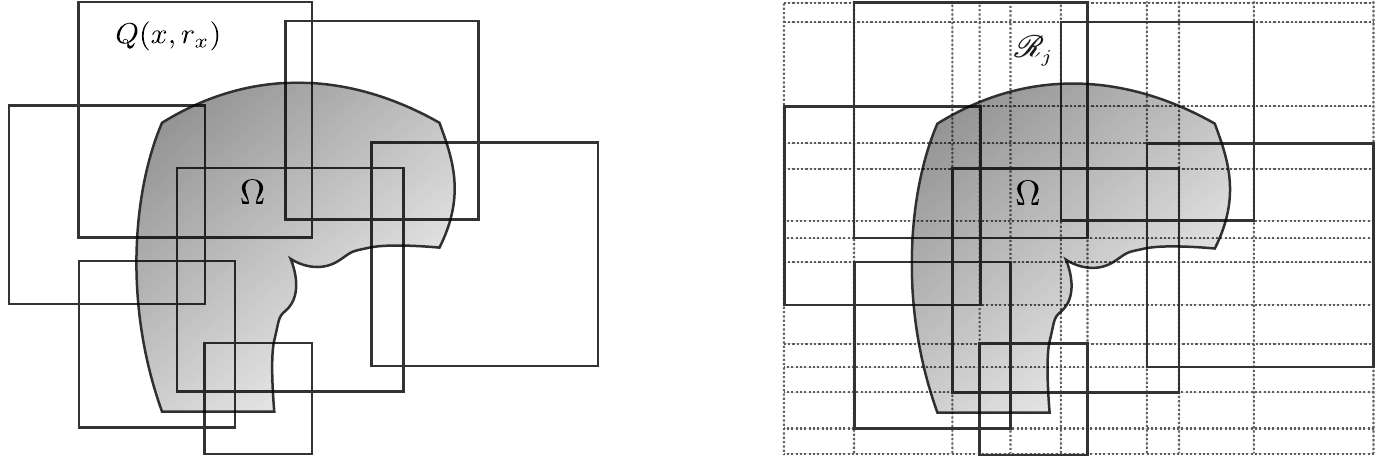}
\caption{On the left: Finite cover of $\Omega$ with cubes $Q(x,r_x)$. On the right: Finite cover of $\Omega$ with the rectangles having disjoint interiors, generated by the cubes. Note that, for simplicity, in this figure the finite cover with cubes $Q(x,r_x)$ is not represented so that it satisfies the condition of $\Omega\cap Q(x,2r_x)$ being diffeomorphic to a cone. However, the construction of the rectangles $\mathcal{R}_j$ can be done in the exact same way if we start with a finer cover of cubes. }
\label{fig:cover}
\end{figure}

Whereby, we have $J\subseteq \N$ and pairwise disjoint open rectangles $(\mathcal{R}_j)_{j\in J}$ such that
\begin{equation*}
\Omega\subseteq\bigcup_{j\in J} \overline{\mathcal{R}_j}.
\end{equation*}
Condition (\ref{eq:mu_sing0}) can be achieved by noting that at most a countable number of hyperplanes parallel to the coordinate axes can admit positive $\mu$-measure. Hence, the initial cover $Q(x,r_x)$ can be taken by requiring that the faces of each $Q(x,r_x)$ are contained in hyperplanes so that, when intersected with a neighbourhood of $\Omega$, they have null $\mu$-measure. Note that the faces of the rectangles $\mathcal{R}_j$ are necessarily contained in the same hyperplanes as those that contain the faces of the cubes $Q(x,r_x)$. Since these hyperplanes have null $\mu$-measure, the faces of each $\mathcal{R}_j$ will also have null $\mu$-measure.
\vspace{0.2cm}

\noindent\emph{\bf Step 2.} The second part of the proof consists on using the cover above to construct a global  version of Theorem \ref{zhanglemma}. The localization that enables us to use the G{\aa}rding inequality on the rectangles built above will lead to having an \textit{error} term, that will vanish when we consider suitable sequences of variations. 

Let $\lambda \mathcal{R}_j$ denote the dilation of $\mathcal{R}_j$ with respect to the center of the rectangle by a factor of $\lambda\in [1,2]$. Let also $\Omega_{\lambda\mathcal{R}_j}:=\Omega\cap\lambda\mathcal{R}_j$. We will show that 
\begin{align}\label{eq8}
&\frac{c_0}{2}\underset{\Omega}{\int}|S(\nabla\varphi(x))|^2 \dx-c\sum_{j\in J}\underset{\Omega_{\lambda\mathcal{R}_j}-\Omega_{\mathcal{R}_j}}{\int}\left(|S(\nabla\varphi(x))|^2+\left|S\left(\frac{\varphi(x)}{\lambda-1}\right)\right|^2\right)\dx\notag\\
\leq& \underset{\Omega}{\int}(F(x, u_0(x)+\varphi(x),\nabla u_0(x)+\nabla\varphi(x))-F(x)+ C |\varphi(x)|^2)\dx\notag\\
=&\underset{\Omega}{\int}\left(G(x,\varphi(x),\nabla\varphi(x))+ C |\varphi(x)|^2\right)\dx
\end{align}
for all   $\lambda\in (1,2)$ and all $\varphi\in \overline{\mathrm{Var}(\Omega,\R^N)^{\W^{1,p}}}$ with $||\varphi||_{\LL^\infty(\Omega,\R^N)} < \delta$ and $\delta>0$ as in Theorem \ref{zhanglemma}.\vspace{0.2cm}

Consider the cover for $\Omega$ built in the previous step and, for each $j\in J$ and any $\lambda\in(1,2)$, take cut-off functions $\rho_j\in {\rm C}^1_c(2\mathcal{R}_j)$ such that $\mathbbm{1}_{\mathcal{R}_j}\leq \rho_j\leq\mathbbm{1}_{\lambda\mathcal{R}_j}$ and $|\nabla\rho_j|\leq\frac{c}{\lambda-1}$ with $c>0$ a constant independent of $j$. Note that the rectangles $\lambda\mathcal{R}_j$ have bounded overlap since there is a finite number of them. 

Additionally, if $\varphi\in \overline{\mathrm{Var}(\Omega,\R^N)^{\W^{1,p}}}$, then $\rho_j\varphi\in \overline{\mathrm{Var}(\Omega_{\lambda\mathcal{R}_j},\R^N)^{\W^{1,p}}}$ and, since $2\mathcal{R}_j\subseteq \Omega(x_0,R)$ for some neighbourhood adequate to guarantee \eqref{eq:minimality} from Theorem \ref{zhanglemma}, then

\begin{align*}
\underset{\Omega_{\lambda\mathcal{R}_j}}{\int}\frac{c_0}{2}|S(\nabla(\rho_j\varphi))|^2\dx\leq& \underset{\Omega_{\lambda\mathcal{R}_j}}{\int}\left[F(x,u_0+\rho_j\varphi,\nabla u_0+\nabla(\rho_j\varphi)) -F(x)\right]\dx
+ \underset{\Omega_{\lambda\mathcal{R}_j}}{\int} C|\rho_j\varphi|^2\dx.
\end{align*}

Since $u_0$ is an $F$-extremal, this also implies that 
\begin{equation*}
\frac{c_0}{2}\underset{\Omega_{\lambda\mathcal{R}_j}}{\int}|S(\nabla(\rho_j\varphi))|^2\dx\leq\underset{\Omega_{\lambda\mathcal{R}_j}}{\int}\left[G(x,\rho_j\varphi,\nabla(\rho_j\varphi)) + C|\rho_j\varphi|^2\right]\dx.
\end{equation*}

But $\rho_j=1$ on $\Omega_{\mathcal{R}_j}$ and, hence, 
\begin{align*}
&\frac{c_0}{2}\underset{\Omega_{\mathcal{R}_j}}{\int}|S(\nabla\varphi)|^2\dx+\frac{c_0}{2}\underset{\Omega_{\lambda\mathcal{R}_j}-\Omega_{\mathcal{R}_j}}{\int}|S(\nabla(\rho_j\varphi))|^2\dx\\
\leq&\underset{\Omega_{\mathcal{R}_j}}{\int}\left[G(x,\varphi,\nabla\varphi)+C|\varphi|^2\right]\dx+\underset{\Omega_{\lambda\mathcal{R}_j}-\Omega_{\mathcal{R}_j}}{\int}\hspace*{-5mm}\left[G(x,\rho_j\varphi,\nabla(\rho_j\varphi))+C|\rho_j\varphi|^2|\right]\dx.
\end{align*}

By Lemma \ref{LemmagrowthG} (a), we find that for each $x\in\overline{\Omega}$, $y\in\R^N$ and $z\in\R^{N\times d}$, the function $G$ satisfies
\begin{equation}\label{Ggrowth}
|G(x,y,z)|\leq  C(y) \left(|S(y)|^2+|S(z)|^2\right)
\end{equation}
for some locally bounded $C(y)$. Noting that $\varphi$ is bounded, using \eqref{Ggrowth} and after adding up the previous inequalities over $j$, we obtain
\begin{align*}
\frac{c_0}{2}\underset{\Omega}{\int}|S(\nabla\varphi)|^2 \dx \leq &\frac{c_0}{2}\underset{\Omega}{\int}|S(\nabla\varphi)|^2 \dx+\frac{c_0}{2}\sum_{j\in J}\underset{\Omega_{\lambda\mathcal{R}_j}-\Omega_{\mathcal{R}_j}}{\int}|S(\nabla(\rho_j\varphi))|^2\dx\\
\leq& \underset{\Omega}{\int}(F(x,u_0+\varphi,\nabla u_0+\nabla\varphi)-F(x) + C|\varphi|^2)\dx\\
&\qquad+c\sum_{j\in J}\underset{\Omega_{\lambda\mathcal{R}_j}-\Omega_{\mathcal{R}_j}}{\int}\left[|S(\rho_j\varphi)|^2+|S(\nabla(\rho_j\varphi))|^2+C|\rho_j\varphi|^2|\right]\dx.\\
\leq & \underset{\Omega}{\int}(F(x,u_0+\varphi,\nabla u_0+\nabla\varphi)-F(x) + C|\varphi|^2)\dx\\
&\qquad+c\sum_{j\in J}\underset{\Omega_{\lambda\mathcal{R}_j}-\Omega_{\mathcal{R}_j}}{\int}\left[|S(\varphi)|^2+|S(\nabla\varphi)|^2+\left|S\left(\frac{\varphi}{\lambda-1}\right)\right|^2+C|\varphi|^2|\right]\dx,
\end{align*}
since $|\rho_j|\leq 1$. Hence, \eqref{eq8} follows because $0 < \lambda-1 <1$ and $|S(\varphi)|^2\leq 2 |\varphi|^2$ as $\|\varphi\|_{\infty} < \delta$.
\vspace{0.2cm}

\noindent\emph{\bf Step 3.} To conclude the proof of the Proposition, we take a sequence $(h_k)\subseteq \overline{{\rm Var}(\Omega,\R^N)^{\W^{1,p}}}\cap \LL^{\infty}(\Omega,\R^N)$ such that:
\begin{itemize}
\item $\| h_k\|_{\LL^\infty}\overset{k\rightarrow\infty}{\longrightarrow}0$;
\item $a_k^{-1}S( h_k) \overset{k\rightarrow\infty}{\longrightarrow}0$  in $\LL^2(\Omega)$;
\item $a_k^{-1}S(\nabla h_k)$ is bounded in $\LL^2(\Omega)$. 
\end{itemize}
Then, for $k\in\N$ large enough, we can apply \eqref{eq8} with $\varphi=h_k$ to obtain, after dividing by $a_k^2$, that
\begin{align}
&\frac{c_0}{2} a_k^{-2}\underset{\Omega}{\int}|S(\nabla h_k(x))|^2\dx-c\sum_{j\in J}\underset{\Omega_{\lambda\mathcal{R}_j}-\Omega_{\mathcal{R}_j}}{\int}a_k^{-2} \left(|S(\nabla h_k(x)|^2+a_k^{-2}\left|S\left(\frac{h_k(x)}{\lambda-1}\right)\right|^2\right)\dx\notag\\
\leq& \, a_k^{-2}\underset{\Omega}{\int}(F(x, u_0(x)+h_k,\nabla u_0(x)+\nabla h_k (x))-F(x)+ C |h_k(x)|^2)\dx\notag\\
=&\,a_k^{-2}\underset{\Omega}{\int}\left(G(x, h_k(x),\nabla h_k(x))+ C |h_k(x)|^2\right)\dx.\label{eq:forGlobalGarding}
\end{align}
Next, since $a_k^{-1}S(\nabla h_k)$ is bounded in $\LL^2(\Omega)$, for a subsequence that we do not relabel we may assume that  $a_k^{-2}|S(\nabla h_k)|^2\mathcal{L}^d\overset{*}{\rightharpoonup}\mu$ in ${\rm C}^0\left(\overline{\Omega}\right)^*\cong\mathcal{M}\left(\overline{\Omega}\right)$, where $\mathcal L^d$ denotes the $d$-dimensional Lebesgue measure. Furthermore, given that $a_k^{-1}S(h_k)\overset{k\rightarrow\infty}{\longrightarrow}0$ in $\LL^2$, then for every $\lambda\in (0,1)$, 
\begin{equation*}
\limsup_{k\to\infty} \sum_{j\in J}\underset{\Omega_{\lambda\mathcal{R}_j}-\Omega_{\mathcal{R}_j}}{\int}\left(a_k^{-2}|S(\nabla h_k(x)|^2+a_k^{-2}\left|S\left(\frac{h_k(x)}{\lambda-1}\right)\right|^2\right)\dx \leq \sum_{j\in J}\mu\left(\overline{\Omega}\cap(\overline{\lambda\mathcal{R}_j}-\mathcal{R}_j)\right). 
\end{equation*}
Whereby, taking $\liminf$ in \eqref{eq:forGlobalGarding}, we obtain that
\begin{align*}
&\liminf_{k\rightarrow\infty}\frac{c_0}{2}a_k^{-2}\underset{\Omega}{\int}|S(\nabla h_k(x))|^2\dx-c\sum_{j\in J}\mu\left(\overline{\Omega}\cap(\overline{\lambda\mathcal{R}_j}-\mathcal{R}_j)\right)\notag\\
\leq &\liminf_{k\rightarrow\infty}\,a_k^{-2}\underset{\Omega}{\int}G(x, h_k(x),\nabla h_k(x))\dx.
\end{align*}
Note that for the last inequality above we are using that $\|h_k\|_{\LL^2}\leq \|S(h_k)\|_{\LL^2}$. 

Finally, by sending $\lambda \searrow 1$ and using \eqref{eq:mu_sing0}, we conclude that
\begin{align*}
\liminf_{k\rightarrow\infty}\,\frac{c_0}{2}a_k^{-2}\underset{\Omega}{\int}|S(\nabla h_k(x))|^2\dx =&\,\liminf_{k\rightarrow\infty}\frac{c_0}{2}a_k^{-2}\underset{\Omega}{\int}|S(\nabla h_k(x))|^2\dx-c\sum_{j\in J}\mu\left(\overline{\Omega}\cap\partial\mathcal{R}_j\right)\notag\\
\leq &\liminf_{k\rightarrow\infty}\,a_k^{-2}\underset{\Omega}{\int}G(x,h_k(x),\nabla h_k(x))\dx.
\end{align*}

\end{proof}

A fundamental tool in the proof of the Sufficiency Theorem (Theorem \ref{thm:sufficiency}) is a decomposition result that finds its origins in the Decomposition Lemma established by Kristensen \cite{KristDecomp,KristensenLSC} and, by other means, by Fonseca-M\"uller-Pedregal in \cite{FonsMullPed}.  This result allows, up to subsequences, the splitting of a weakly converging sequence into an oscillating and a concentrating part. We enunciate here Theorem \ref{lemma:decomposition} concerning a variant of the Decomposition Lemma that enables us to split simultaneously the normalisations in $\W^{1,2}$ and $\W^{1,p}$, respectively, of a given sequence. This version of the Decomposition Lemma was established in \cite{GM09}, where its proof can be found. We remark that this is based on the Lipschitz truncation strategy followed in \cite{FonsMullPed}, while Kristensen's proof uses the Helmholtz Decomposition Theorem.

\begin{theorem}\label{lemma:decomposition}
Let $\Omega\subseteq\R^d$ be a bounded Lipschitz domain and $p\in[2,\infty)$. Let $(\psi_k)\subseteq\varA$ such that $\psi_k\rightharpoonup \psi$ in $\W^{1,2}(\Omega,\R^N)$ and assume that $(\eta_k)$ is a sequence in $(0,1]$ such that $\eta_k\psi_k$ is bounded in $\W^{1,p}(\Omega,\R^N)$. If $p=2$, assume that $\eta_k=1$. Further, suppose that $\alpha_k>0$, $\alpha_k\to0$ and that $\alpha_k\psi_k\to0$ uniformly in $\Omega$. Then, there exist a subsequence of $(\psi_k)$ (not relabelled), sequences $(g_k)\subseteq -\psi+\W^{1,\infty}(\Omega,\R^N)$, $(b_k)\subseteq \W^{1,\infty}(\Omega,\R^N)$ with $\psi+g_k(x)=b_k(x) = 0$ for $x\in\Gamma_D$ and $R_k\subset\Omega$ such that 
\begin{itemize}
\item[\textup{(a)}] $\psi_k=\psi + g_k + b_k$;
\item[\textup{(b)}] $g_k\rightharpoonup 0$ and $b_k\rightharpoonup 0$ in $\W^{1,2}(\Omega,\R^N)$;
\item[\textup{(c)}] for all $x\in\Omega\backslash R_k$, $\psi_k=\psi+g_k$ and $\nabla\psi_k=\nabla\psi+\nabla g_k$;
\item[\textup{(d)}] $\mathcal{L}^d(R_k)\longrightarrow 0$ and, hence, $\nabla b_k\rightarrow 0$ in measure;
\item[\textup{(e)}] $(|\nabla g_k|^2)$ and $(|\eta_k\nabla g_k|^p)$ are both equiintegrable;
\item[\textup{(f)}] $\alpha_k(\psi + g_k)\rightarrow 0$ and $\alpha_k b_k\rightarrow 0$ uniformly in $\Omega$ and
\item[\textup{(g)}] $\eta_k b_k\rightharpoonup 0$ in $\W^{1,p}(\Omega,\R^N)$.
\end{itemize}
\end{theorem}

We note that since $b_k\in\W^{1,\infty}(\Omega,\R^N)$ and $b_k = 0$ on $\Gamma_D$, then also $b_k\in\overline{\varA^{\W^{1,p}}}$. The following lemma will also play a crucial role in the proof of Theorem \ref{thm:sufficiency} and, though simple, we present its proof here.
\begin{lemma}\label{lemma:F''QC}
Let $W\colon\R^{N\times d}\rightarrow\R$ and $z_0\in\R^{N\times d}$. Suppose that $W$ is of class $C^2$ in a neighbourhood of $z_0$ and that it is strongly quasiconvex (in the interior) at $z_0$, i.e. for all $\varphi\in {\rm C}^1_c({B},\R^N)$,
\[
\int_B\left[W(z_0+\nabla\varphi)-W(z_0)\right]\dx\geq c_0\int_B |S(\nabla\varphi)|^2\dx.
\]
Then, for every $z_0\in\R^{N\times d}$ and every $\varphi\in {\rm C}^1_c(B,\R^N)$, it holds that
\begin{equation*}
2c_0\int_\Omega|\nabla\varphi|^2\dx\leq\int_\Omega W_{zz}(z_0)\nabla\varphi\cdot\nabla\varphi\dx,
\end{equation*}
i.e. $W_{zz}(z_0)[\cdot,\cdot]-2c_0|\cdot|^2$ is quasiconvex.
\end{lemma}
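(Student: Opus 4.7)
The strategy is a standard rescaling-plus-Taylor-expansion argument, exploiting that the strong quasiconvexity inequality is stable under the replacement $\varphi \mapsto t\varphi$ for small $t$ and that the quadratic behaviour of $F$ near $z_0$ will dominate in the limit.

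Fix $\varphi\in C^1_c(B,\R^N)$ and, for small $t>0$, apply the strong quasiconvexity hypothesis to the test function $t\varphi$, which is again in $C^1_c(B,\R^N)$. Since $|S(t\nabla\varphi)|^2 = t^2|\nabla\varphi|^2 + t^p|\nabla\varphi|^p$, this yields
\[
\int_B\bigl[F(z_0+t\nabla\varphi(x))-F(z_0)\bigr]\dx \;\geq\; c_0 t^2\!\int_B |\nabla\varphi|^2\dx + c_0 t^p\!\int_B |\nabla\varphi|^p\dx.
\]

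Next I would use that $F$ is $C^2$ on a neighbourhood of $z_0$, so Taylor's theorem gives
\[
F(z_0+t\nabla\varphi(x)) - F(z_0) = t\,F_z(z_0)\cdot\nabla\varphi(x) + \tfrac{t^2}{2}F_{zz}(z_0)\nabla\varphi(x)\cdot\nabla\varphi(x) + o(t^2),
\]
where $o(t^2)/t^2 \to 0$ uniformly in $x\in B$ by the continuity of $F_{zz}$ and the fact that the image of $\nabla\varphi$ is bounded. The linear term integrates to zero: since $\varphi$ has compact support in $B$, an integration by parts gives $\int_B F_z(z_0)\cdot\nabla\varphi\dx = 0$ (the constant matrix $F_z(z_0)$ may be pulled out and $\int_B\nabla\varphi=0$). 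Substituting into the quasiconvexity inequality, dividing by $t^2/2$, and letting $t\to 0^+$ yields
\[
\int_B F_{zz}(z_0)\nabla\varphi\cdot\nabla\varphi\dx \;\geq\; 2c_0\!\int_B|\nabla\varphi|^2\dx + \lim_{t\to 0^+}2c_0 t^{p-2}\!\int_B|\nabla\varphi|^p\dx,
\]
and since $p\geq 2$ the last term is non-negative (equal to $2c_0\int_B|\nabla\varphi|^p$ if $p=2$, and vanishing if $p>2$), which establishes the claimed pointwise inequality
\[
2c_0\!\int_B|\nabla\varphi|^2\dx \;\leq\; \int_B F_{zz}(z_0)\nabla\varphi\cdot\nabla\varphi\dx.
\]

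Finally, since the integrand $Q(\xi):=F_{zz}(z_0)\xi\cdot\xi - 2c_0|\xi|^2$ is a quadratic form, the inequality above says exactly that $Q$ is quasiconvex at $0$, and for quadratic forms this is equivalent to quasiconvexity at every matrix $A$ (the cross term $2F_{zz}(z_0)A\cdot\nabla\varphi - 4c_0 A\cdot\nabla\varphi$ integrates to zero on $C^1_c$ test fields). No real obstacle is anticipated: the only point that requires mild care is checking that the $o(t^2)$ remainder in Taylor's expansion is indeed uniform in $x$, which follows from the $C^2$ regularity of $F$ near $z_0$ together with the boundedness of $\nabla\varphi(B)$, and the case analysis $p=2$ versus $p>2$ in the final limit.
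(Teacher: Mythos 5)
Your proof is correct and is essentially the paper's argument, just presented slightly differently: the paper defines $J(t)=\int_B[F(z_0+t\nabla\varphi)-F(z_0)-c_0|S(t\nabla\varphi)|^2]\dx$, observes that $t=0$ minimises $J$, and concludes $J''(0)\geq 0$, whereas you unwind the same computation explicitly via Taylor expansion, cancellation of the linear term, and passage to the limit $t\to 0^+$ with the $p=2$ versus $p>2$ case distinction. The content is identical.
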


\begin{proof}
By the quasiconvexity at $z_0$ it follows that, for every $\varphi\in {\rm C}^1_c(B,\R^N)$, $t=0$ minimises the real valued function
\begin{equation}
J(t):=\int_B\left[W(z_0+t\nabla\varphi)-W(z_0)-c_0|S(t\nabla\varphi)|^2\right]\dx.
\end{equation}
Hence, 
\begin{equation}
0\leq J''(0)=\int_B\left[W_{zz}(z_0)\nabla\varphi\cdot\nabla\varphi-2c_0|\nabla\varphi|^2\right)\dx.
\end{equation}
\end{proof}

\begin{remark}\label{remark:F''QC}
For $F$ and $u_0$ as in Theorem \ref{thm:sufficiency}, Lemma \ref{lemma:F''QC} says that condition $(III)$ on the strong quasiconvexity in the interior implies that, for each $x\in\overline{\Omega}$ and any $a(x)\in\R^{N\times d}$, the function $H(x,z) = F_{zz}(x)z\cdot z - 2c_0|z - a(x)|^2$ is quasiconvex at every $z\in\R^{N\times d}$.
\end{remark}
\subsection{Proof of the Sufficiency Theorem}
The remainder of this Section is devoted to the proof of the Sufficiency Theorem (Theorem \ref{thm:sufficiency}).
\begin{proof}
We prove the result arguing by contradiction. Suppose that the theorem does not hold. Then, we can find a sequence $(\varphi_k)\subseteq \mathrm{Var}(\Omega,\R^N)$ such that $\|\varphi_k\|_{\LL^\infty(\Omega,\R^N)}\rightarrow 0$ and
\begin{equation}\label{forcontGM}
\underset{\Omega}{\int} F(x,u_0(x)+\varphi_k(x),\nabla u_0(x) +\nabla\varphi_k(x))\dx<\underset{\Omega}{\int} F(x)\dx
\end{equation}
for all $k\in\N$. 
As in Lemma \ref{LemmagrowthG}, we use Taylor's Theorem and define 
\begin{align*}
G(x,y,z):=&F(x,u_0(x)+y,\nabla u_0(x)+z)-F(x)-F_y(x)\cdot y-F_z(x)\cdot z\notag\\
=&\int_0^1 (1-t)L(x, u_0(x)+ty,\nabla u_0(x)+tz)[(y,z),(y,z)]\dt,
\end{align*}
where
\begin{align*}
L(x,v,w)[(y,z),(y,z)]=& F_{yy}(x,v,w)y\cdot y+ 2F_{yz}(x,v,w)y\cdot z+F_{zz}(x,v,w)z\cdot z.
\end{align*}

Note that, since $u_0$ is an $F$-extremal, for every $k\in\N$ it holds that
\begin{align}
&\underset{\Omega}{\int} G(x,\varphi_k,\nabla\varphi_k)\dx\notag\\
=&\underset{\Omega}{\int}\int_0^1(1-t) L(x, u_0(x)+t\varphi_k(x),\nabla u_0(x)+t\nabla\varphi_k(x))[(\varphi_k,\nabla\varphi_k),(\varphi_k,\nabla\varphi_k)]\dt\dx\notag\\
=&\underset{\Omega}{\int} \bigl(F(x,u_0(x)+\varphi_k(x),\nabla u_0(x)+\nabla\varphi_k(x))-F(x)\bigr)\dx<0.
\label{eqGpsi}
\end{align} 

This inequality suggests the underlying idea of the proof which is to exploit the strong positivity of the second variation to obtain a contradiction. Indeed, by a normalisation argument we show that one can construct a sequence of variations $(\psi_k)$ suitable for this purpose.

We remark that, for simplicity, we will use the notation $\Omega(x,r):=\Omega\cap Q(x,r)$, where $Q(x,r)$ is the cube with side length $2r$ rather than the ball $B(x,r)$. Note that, due to Remark \ref{remark:cubes}, Theorem \ref{zhanglemma} remains valid if we use cubes instead of balls. We divide the proof into several steps.\vspace{0.2cm}

\noindent\emph{\bf Step 1.} In this step we will show that the coercivity assumption [H2] reduces the problem to the case of $\W^{1,p}$-local minimisers. In particular, we show the following. \vspace{0.2cm}

\noindent{\bf Claim 1.} Let $\gamma_k:= ||S(\nabla\varphi_k)||_{\LL^2}$, $\alpha_k:= \|\nabla\varphi_k\|_{\LL^2}$ and $\beta_k:= (2|\Omega|)^{\frac{1}{2}-\frac{1}{p}}\|\nabla\varphi_k\|_{\LL^p}$. We claim that $\gamma_k\to0$ and consequently, $\alpha_k\to0$ and $\beta_k\to 0$. Moreover, the sequence of variations $(\varphi_k)$ is such that 
\begin{equation}\label{betapbyalpha2bded}
0\leq\underset{k\in\N}{\sup}\frac{\beta_k^p}{\alpha_k^2}=\Lambda<\infty
\end{equation}
for some real number $\Lambda>0$.\vspace{0.2cm}

\noindent\emph{\bf Proof of Claim 1.} By virtue of assumption [H2] and \eqref{forcontGM}, $(\varphi_k)$ is uniformly bounded in $\W^{1,p}(\Omega,\R^N)$ and must therefore converge weakly to $0$, since it converges strongly to $0$ in $\LL^\infty(\Omega,\R^N)$. 

Note that $\gamma_k>0$ for all $k\in\N$ and $(\gamma_k)$ is a bounded sequence because $p\geq 2$. Therefore, up to a subsequence that we do not relabel, $\gamma_k\to\gamma\geq0$. Next, setting $\alpha_k=1$ and $h_k = \varphi_k$ in Proposition \ref{global_garding} and using \eqref{eqGpsi}, we infer that

\begin{equation*}
0 \leq \frac{c_0}{2}\gamma = \liminf_{k\rightarrow\infty}\frac{c_0}{2}\underset{\Omega}{\int}|S(\nabla \varphi_k(x))|^2\dx 
\leq \liminf_{k\rightarrow\infty}\underset{\Omega}{\int}G(x,\varphi_k(x),\nabla \varphi_k(x))\dx \leq 0.
\end{equation*}

Hence, $\gamma = 0$ and $\gamma_k\to0$. Regarding the boundedness of $\beta_k^p/\alpha_k^2$, this is trivial if $p=2$. If $p>2$, from the coercivity assumption (H2) applied to $\varphi_k$ it follows, after dividing by $\alpha_k^2$, that for every $k\in\N$, 
\begin{equation*}
c_1\frac{\beta_k^p}{\alpha_k^2}-c_2\leq\alpha_k^{-2}\underset{\Omega}{\int}\left[F(x,u_0(x)+\varphi_k(x),\nabla u_0(x) + \nabla\varphi_k(x))-F(x)\right]\dx<0.
\end{equation*}
This proves that the sequence $\left( \frac{\beta_k^p}{\alpha_k^2}\right)$ is bounded and the claim follows. We remark that, similarly to \cite{GM09}, assumption [H2] was required precisely to reduce the problem to that of $\W^{1,p}$-local minimisers.
\\

\noindent\emph{\bf Step 2.} Note that, by Claim 1, 
\[
\int_{\Omega}G(x,\varphi_k,\nabla\varphi_k)\dx \to 0
\]
and a contradiction cannot be reached in this way. Instead, we define the normalised sequence of variations $\psi_k = \alpha_k^{-1}\varphi_k\in \mathrm{Var}(\Omega,\R^N)$. The sequence $\psi_k$ may be bounded in $\W^{1,2}$ but, unlike $\varphi_k$, will concentrate and fail to converge strongly in $\W^{1,2}$. We first decompose the sequence $\psi_k$ into an oscillating and a concentrating part using Lemma \ref{lemma:decomposition}.

To this end, let $\eta_k:=\frac{\alpha_k}{\beta_k}$ and note that, by H\"{o}lder's inequality,
\begin{equation*}
\alpha_k= \|\nabla\varphi_k\|_{\LL^2}\leq (2|\Omega|)^{\frac{1}{2}-\frac{1}{p}}\|\nabla\varphi_k\|_{\LL^p}=\beta_k.
\end{equation*}
Therefore, $\eta_k=\frac{\alpha_k}{\beta_k}\in(0,1]$ for every $k\in\N$. Also, it is clear that $\alpha_k\psi_k=\varphi_k\to0$ uniformly and, to apply the Decomposition Lemma, we need only observe that $(\eta_k\psi_k)$ is bounded in $\W^{1,p}(\Omega,\R^N)$, because $\underset{\Omega}{\int}|\eta_k\nabla\psi_k|^p=\beta_k^{-p}\underset{\Omega}{\int}|\nabla\varphi_k|^p=(2|\Omega|)^{1-\frac{p}{2}}$. Then, up to a subsequence that we do not relabel, there exist sequences $(g_k)\subseteq-\psi+\W^{1,\infty}(\Omega,\R^N)$ and $(b_k)\subseteq \W^{1,\infty}(\Omega,\R^N)$ with $\psi+g_k=b_k=0$ on $\Gamma_D$, as well as sets $(R_k)\subseteq\Omega$
such that:
\begin{itemize}
\item $\psi_k=\psi + g_k + b_k$;
\item $g_k\rightharpoonup 0$ and $b_k\rightharpoonup 0$ in $\W^{1,2}(\Omega,\R^N)$;
\item for all $x\in\Omega\backslash R_k$, $\psi_k=\psi+g_k$ and $\nabla\psi_k=\nabla\psi+\nabla g_k$;
\item $\mathcal{L}^d(R_k)\longrightarrow 0$ and, hence, $\nabla b_k\rightarrow 0$ in measure;
\item $(|\nabla g_k|^2)$ and $(|\eta_k\nabla g_k|^p)$ are both equiintegrable;
\item $\alpha_k(\psi + g_k)\rightarrow 0$ and $\alpha_k b_k\rightarrow 0$ uniformly in $\Omega$ and
\item $\eta_k b_k\rightharpoonup 0$ in $\W^{1,p}(\Omega,\R^N)$.
\end{itemize}

Next, as in \cite{GM09}, we proceed to prove an orthogonality principle between the oscillating and concentrating parts of $\psi_k$ implying that they act on the functional independently. We show the following:\vspace{0.2cm}

\noindent{\bf Claim 2.} As $k\to\infty$, it holds that
\[
\alpha_k^{-2} \int_{\Omega}\left[ G(x,\alpha_k\psi_k,\alpha_k \nabla \psi_k) - G(x,\alpha_kb_k,\alpha_k \nabla b_k) - G(x,\alpha_k(\psi+ g_k),\alpha_k (\nabla\psi+ \nabla g_k))\right]\dx \to 0.
\]
In particular,
\begin{align*}
&\liminf_{k\to\infty}\alpha_k^{-2} \int_{\Omega}\left[ G(x,\alpha_kb_k,\alpha_k \nabla b_k) + G(x,\alpha_k(\psi+ g_k),\alpha_k (\nabla\psi + \nabla g_k))\right]\dx \\
 = &\liminf_{k\to\infty} \alpha_k^{-2} \int_{\Omega} \left|G(x,\alpha_k\psi_k,\alpha_k \nabla \psi_k)\right|\dx \\
  =  & \liminf_{k\to\infty} \alpha_k^{-2} \int_{\Omega} G(x,\varphi_k, \nabla\varphi_k)\dx \leq 0.
\end{align*}

\noindent\emph{\bf Proof of Claim 2.} Recall that, by the Decomposition Lemma, $\psi_k = \psi + g_k$ for $x\in \Omega\backslash R_k$ and that $\mathcal{L}^d(R_k)\to 0$. Then, 
\begin{align*}
& \limsup_k \left|\alpha_k^{-2} \int_{\Omega}\left[ G(x,\alpha_k\psi_k,\alpha_k \nabla \psi_k) - G(x,\alpha_kb_k,\alpha_k \nabla b_k) - G(x,\alpha_k(\psi+ g_k),\alpha_k (\nabla\psi +\nabla g_k)\right]\dx\right|\\ 
&\leq \limsup_k  \int_{R_k} |f_k(x)|\dx + \limsup_k \alpha_k^{-2}\int_{R_k} \left|G(x,\alpha_k(\psi+ g_k),\alpha_k (\nabla\psi+ \nabla g_k)\right|\dx\\
& =: I+II,
\end{align*}
where
\[
f_k(x):= \alpha_k^{-2}\left[ G(x,\alpha_k\psi_k,\alpha_k \nabla \psi_k) - G(x,\alpha_kb_k,\alpha_k \nabla b_k)\right].
\]
We show that $I=II = 0$ by proving the equiintegrability of the integrands. To prove that $II=0$, simply note that by Lemma \ref{LemmagrowthG} (a)
\begin{align*}
&\alpha_k^{-2} \left|G(x,\alpha_k(\psi+ g_k),\alpha_k (\nabla\psi + \nabla g_k))\right| \\
&\quad  \leq \alpha_k^{-2}C(\alpha_k (\psi+g_k))\left[ |S(\alpha_k (\psi+g_k))|^2 + |S(\alpha_k(\nabla \psi+ \nabla g_k))|^2\right]\\
&\quad \leq C(\alpha_k(\psi+g_k)) \left[ |\psi + g_k|^2 + \alpha_k^{p-2}|\psi + g_k|^p + |\nabla\psi+\nabla g_k|^2 + \alpha_k^{p-2}|\nabla\psi+\nabla g_k|^p\right].
\end{align*}
However, by the Decomposition Lemma, $\alpha_k(\psi+ g_k)\to0$ uniformly and $(|g_k|^2)$ as well as $(|\nabla g_k|^2)$ are equiintegrable. Also, by the boundedness of $\left(\frac{\beta_k^p}{\alpha_k^2}\right)$ and the fact that
\begin{equation*}
\alpha_k^{p-2}|\nabla g_k|^p= \frac{\beta_k^p}{\alpha_k^2}\eta_k^p|\nabla g_k|^p, 
\end{equation*}
we deduce that $(\alpha_k^{p-2}|\nabla\psi+\nabla g_k|^p)$ is equiintegrable and so is $(\alpha_k^{p-2}|\psi+g_k|^p)$. Hence, $II = 0$.
\vspace{0.2cm}

\noindent
To prove that $I=0$, we prove the equiintegrability of $(f_k)$. Note that by Lemma \ref{LemmagrowthG} (a),
for every $y,\hat{ y}\in\R^N$, ${z},\hat{ z}\in\R^{N\times d}$ and for every $x\in\overline{\Omega}$,
\begin{equation*}
|G(x,y,z)-G(x,\hat{ y},\hat{ z})|\leq C(y,\hat y)\left[A_{p-1}(y,z,\hat y,\hat z)|z-\hat z|+A_p(y,z,\hat y,\hat  z)|y - \hat y|\right].
\end{equation*}
Then, by Young's inequality, we obtain that for any $\varepsilon>0$, there exists a constant $C_\varepsilon$ such that 
\begin{align*}
|f_k|\leq &\, C\alpha_k^{-1}\left( A_{p-1}(\alpha_k\psi_k,\alpha_k b_k,\alpha_k\nabla\psi_k,\alpha_k\nabla b_k)|\nabla\psi+\nabla g_k|\right)\\
&+C\alpha_k^{-1} \left(A_{p}(\alpha_k\psi_k,\alpha_k b_k,\alpha_k\nabla\psi_k,\alpha_k\nabla b_k) \right) |\psi+ g_k|\\
\leq & C\left(|\psi_k| + |b_k|+|\nabla\psi_k|+|\nabla b_k|+\alpha_k^{p-2}|\nabla\psi_k|^{p-1}+\alpha_k^{p-2}|\nabla b_k|^{p-1}\right)|\nabla\psi+\nabla g_k| \\
&\quad + C\left(|\psi_k| + |b_k|+|\nabla\psi_k|+|\nabla b_k|+\alpha_k^{p-1}|\nabla\psi_k|^{p}+\alpha_k^{p-1}|\nabla b_k|^{p}\right)|\psi+g_k|\\
\leq & \eps C \left(|\psi_k|^2 + |b_k|^2+|\nabla\psi_k|^2+|\nabla b_k|^2 +\alpha_k^{p-2}|\nabla\psi_k|^{p}+\alpha_k^{p-2}|\nabla b_k|^{p} \right) \\
&\quad + C_\eps \left(|\nabla\psi + \nabla g_k|^2+\alpha_k^{p-2}|\nabla\psi + \nabla g_k|^p\right)\\
&\quad+\eps C\left(|\psi_k|^2 + |b_k|^2+|\nabla\psi_k|^2+|\nabla b_k|^2\right)\\
&\quad+ C_\eps|\psi+g_k|^2+\left(\alpha_k^{p-2}|\nabla\psi_k|^{p}+\alpha_k^{p-2}|\nabla b_k|^{p}\right)|\alpha_k(\psi+g_k)|=:\sum_{i=1}^5e_i(x).
\end{align*}
Above, $C$ is a constant (varying from line to line) since $\alpha_k\psi_k$ and $\alpha_k b_k$ are uniformly bounded. Observe that $\psi_k$, $b_k$ are both bounded in $\W^{1,2}(\Omega,\R^N)$. Also,
\[
\alpha_k^{p-2} |\nabla\psi_k|^{p}  = \frac{\beta_k^p}{\alpha_k^2} |\eta_k\nabla\psi_k|^p
\]
is bounded in $\LL^1(\Omega,\R^N)$, since $\left(\frac{\beta_k^p}{\alpha_k^2}\right)$ is bounded by (\ref{betapbyalpha2bded}) and $(\eta_k\psi_k)$ is bounded in $\W^{1,p}(\Omega,\R^N)$. Similarly, the same is true for $\alpha_k^{p-2}|\nabla b_k|^p$ by the boundedness of $(\eta_k b_k)$ in $\W^{1,p}(\Omega,\R^N)$.  Hence, for any set $A\subset\Omega$ we have
\[
\int_A |f_k(x)|\dx \leq \eps C + \int_A e_2(x)\dx  + \int_A e_4(x)\dx + \int_A e_5(x)\dx.
\]
As with term $II$, since $\alpha_k(\psi+ g_k)\to0$ uniformly, $e_5$ is equiintegrable. Also, by the boundedness of $\left(\frac{\beta_k^p}{\alpha_k^2}\right)$ and the fact that
\begin{equation*}
\alpha_k^{p-2}|\nabla g_k|^p= \frac{\beta_k^p}{\alpha_k^2}\eta_k^p|\nabla g_k|^p, 
\end{equation*}
we deduce that $(\alpha_k^{p-2}|\nabla g_k|^p)$ is equiintegrable and, hence, so is $e_2$. Finally, since $(|\psi+g_k|^2)$ is also equiintegrable, the same holds for $(f_k)$. This concludes the proof of Claim 2 and the orthogonality principle.\vspace{0.2cm}

\noindent{\bf Step 3.} In the same spirit as \cite{GM09}, we now prove that the quasiconvexity conditions prevent the concentrating part of the sequence $\psi_k$, i.e. $b_k$, to lower the energy. We remark, however, that our strategy to achieve this differs from the original one in \cite{GM09} on that it fully relies on the G{\aa}rding-type inequality from Proposition \ref{global_garding}. Whereby, under the smoothness assumption on the extremal, it persists as a natural consequence of the quasiconvexity conditions. Specifically, we show the following:\vspace{0.2cm}

\noindent{\bf Claim 3.} 
\[
\liminf_k \alpha_k^{-2} \int_{\Omega} G(x,\alpha_kb_k,\alpha_k \nabla b_k)\dx \geq 0.
\]
This implies, in turn, that
\[
\liminf_k  \int_{\Omega} f_k(x)\dx \leq \liminf_k \alpha_k^{-2} \int_{\Omega} G(x,\varphi_k,\nabla\varphi_k)\dx \leq 0.
\]
\noindent{\bf Proof of Claim 3.} In virtue of Claim 1 and since $\alpha_k^{p-2}=\frac{\beta_k^p}{\alpha_k^2}\eta_k^p$, the first part of the claim is a direct consequence of applying Proposition \ref{global_garding} with $h_k=\alpha_kb_k$ and $a_k=\alpha_k$. The second part follows from this, since then
\begin{align*}
\liminf_k  \int_{\Omega} f_k(x)\dx \leq & \liminf_k  \int_{\Omega} f_k(x)\dx + \liminf_k \alpha_k^{-2} \int_{\Omega} G(x,\alpha_kb_k,\alpha_k \nabla b_k)\dx\\
\leq &  \liminf_k \alpha_k^{-2} \int_{\Omega} G(x,\varphi_k,\nabla\varphi_k)\dx \leq 0.
\end{align*}

\vspace{0.2cm}

\noindent{\bf Step 4.} Step 3 has established that any reduction of the energy must come from the purely oscillating sequence $(\psi + g_k)$. In this step we show that this term can be controlled from below by a \textit{Young measure version} of the second variation, which must be nonpositive due to the inequality proved in Step 3. This will contradict the positivity of the second variation. \vspace{0.2cm}

\noindent{\bf Claim 4.} Let $(\nu_x)_x$ be the Young measure generated by the sequence $(\nabla\psi_k)$, which is bounded in $\LL^2$. Then, 
\[
\frac12 \int_\Omega\left[F_{yy}(x)\psi\cdot\psi + 2 F_{yz}(x)\psi\cdot\nabla\psi + \int_{\R^{N\times d}} F_{zz}(x) z\cdot z \dv\nu_x(z)\right]\dx \leq \liminf_k  \int_{\Omega} f_k(x)\dx.
\]
Hence, by Claim 3, we also have that
\[
\frac12 \int_\Omega\left[F_{yy}(x)\psi\cdot\psi + 2 F_{yz}(x)\psi\cdot\nabla\psi + \int_{\R^{N\times d}} F_{zz}(x) z\cdot z \dv\nu_x(z)\right]\dx \leq 0.
\] 

\noindent{\bf Proof of Claim 4.} Recall that in the proof of Claim 2 it was established that $(f_k)$ is equiintegrable. Now, let $\varepsilon>0$. Since $(\nabla\psi_k)$ is measure-tight\footnote{A sequence $(u_k)$ is measure-tight if $\lim_{t\to\infty} \sup_k \mathcal L^d(\{x\in\Omega\st |u_k(x)|>t\}) = 0$.} and $\nabla b_k\rightarrow 0$ in measure, we can take $m_\varepsilon>0$ large enough so that, for every $m\geq m_\varepsilon$, 
\begin{equation*}
\underset{\{|\nabla\psi_k|\geq m\}\cup\{|\nabla b_k|\geq m\}}{\int}|f_k(x)|\dx<\varepsilon
\end{equation*}
for all $k\in\N$. Hence, for all $m\geq m_\varepsilon$, 
\begin{equation}\label{eq:fkminuseps}
\underset{\{|\nabla\psi_k|< m\}\cap\{|\nabla b_k|< m\}}{\int}f_k(x)\dx-\varepsilon<\underset{\Omega}{\int} f_k(x)\dx.
\end{equation}
Also note that, since the Young measure $\nu_x$ has finite second moment (see \cite[Lemma 4.3]{RindlerCV}) and $x\mapsto F_{zz}(x)$ is uniformly bounded, the Dominated Convergence Theorem implies that, by taking $m_\eps$ larger if necessary, 
\begin{equation*}
\left|\underset{\Omega}{\int}\underset{\R^{N\times d}}{\int}F_{zz}(x)z\cdot z\mathbbm{1}_{\R^{N\times d}\backslash\overline{B(0,m)}}(z)\dv\nu_x(z)\dx\right|<\varepsilon\mbox{ \,\,\,for all }m\geq m_\varepsilon.
\end{equation*}
Then, for $m\geq m_\eps$,
\begin{align}\label{eq:pluseps}
&\int_\Omega\left[F_{yy}(x)\psi(x)\cdot\psi(x) + 2 F_{yz}(x)\psi(x)\cdot\nabla\psi(x) + \int F_{zz}(x)z\cdot z \dv{}\nu_x(z)\right]\dx \nonumber \\
\leq &  \int_\Omega F_{yy}(x)\psi(x)\cdot\psi(x) + 2 F_{yz}(x)\psi(x)\cdot\nabla\psi(x)\dx \notag\\
&\quad+\int_\Omega \int F_{zz}(x)z\cdot z\mathbbm{1}_{B(0,m)}(z) \dv{}\nu_x(z)\dx + \eps.
\end{align}
Next, consider the integrand $H\colon\overline{\Omega}\times\R^{N\times d}\rightarrow\R$ given by
\begin{equation*}
H(x,z):=F_{zz}(x)z\cdot z\mathbbm{1}_{B(0,m)}(z).
\end{equation*}
Note that $\mathbbm{1}_{B(0,m)}(z)$ is lower semicontinuous as the indicator function of the open set $B(0,m)$. Hence, $H(x,\cdot)$ is lower semicontinuous for every $x\in\overline{\Omega}$ and, since $\nabla\psi_k$ generates the Young measure $\nu_x$,
\begin{equation*}\label{D2FbyFTYM}
\underset{\Omega}{\int}\int F_{zz}(x)z\cdot z\mathbbm{1}_{B(0,m)}(z)\dv{}\nu_x(z)\dx\leq \liminf_{k\to\infty}\underset{|\nabla\psi_k|<m}{\int}F_{zz}(x)\nabla\psi_k\cdot\nabla\psi_k\dx.
\end{equation*}
Also, $\psi_k\to\psi$ strongly in $\LL^2(\Omega,\R^N)$ and $(\nabla\psi_k)$ is bounded in $\LL^2(\Omega,\R^{N\times d})$, so that the family $(F_{yy}(x)\psi_k\cdot\psi_k + 2 F_{yz}(x)\psi_k\cdot\nabla\psi_k)_k$ is equiintegrable and, by Young measure representation,
\[
 \int_\Omega\left[F_{yy}(x)\psi\cdot\psi + 2 F_{yz}(x)\psi\cdot\nabla\psi\right]\dx=\lim_{k\to\infty} \int_\Omega\left[F_{yy}(x)\psi_k\cdot\psi_k + 2 F_{yz}(x)\psi_k\cdot\nabla\psi_k\right]\dx.
\]
Combining the last two equations with \eqref{eq:pluseps}, we obtain that for all $m\geq m_\eps$,
\begin{align}\label{eq:YMagainstseq}
&\int_\Omega\left[F_{yy}(x)\psi\cdot\psi + 2 F_{yz}(x)\psi\cdot\nabla\psi + \int F_{zz}(x)z\cdot z \dv{}\nu_x(z)\right]\dx \nonumber \\
& \quad \leq \liminf_{k\to\infty}\int_{\{|\nabla\psi_k|<m\}} \left[F_{yy}(x)\psi_k\cdot\psi_k + 2 F_{yz}(x)\psi_k\cdot\nabla\psi_k + F_{zz}(x)\nabla\psi_k\cdot \nabla\psi_k\right]\dx \nonumber \\
&\qquad + \lim_{k\to\infty}\int_{\{|\nabla\psi_k|\geq m\}} \left[F_{yy}(x)\psi_k\cdot\psi_k + 2 F_{yz}(x)\psi_k\cdot\nabla\psi_k\right] \dx + \eps \nonumber \\
&\quad \leq \liminf_{k\to\infty}\int_{\{|\nabla\psi_k|<m\}} \left[F_{yy}(x)\psi_k\cdot\psi_k + 2 F_{yz}(x)\psi_k\cdot\nabla\psi_k + F_{zz}(x)\nabla\psi_k\cdot \nabla\psi_k\right]\dx +2\eps.
\end{align}
Note that the last inequality follows from the fact that $(\nabla\psi_k)$ is measure-tight, the equiintegrability of the sequence  $(F_{yy}(x)\psi_k\cdot\psi_k + 2 F_{yz}(x)\psi_k\cdot\nabla\psi_k)_k$,  and by choosing $m_\eps$ larger if necessary.
\vspace{0.2cm}

\noindent We now claim that
\begin{align}\label{eq:lastbeforemain1}
&\frac12 \liminf_{k\to\infty} \underset{\{|\nabla\psi_k|<m\}}{\int} \left[F_{yy}(x)\psi_k\cdot\psi_k + 2 F_{yz}(x)\psi_k\cdot\nabla\psi_k + F_{zz}(x)\nabla\psi_k\cdot \nabla\psi_k\right]\dx \nonumber \\
& \hspace{5cm} = \liminf_{k\to\infty}\underset{\{|\nabla\psi_k|< m\}\cap\{|\nabla b_k|< m\}}{\int} f_k(x)\dx.
\end{align}
Then, by \eqref{eq:fkminuseps}, \eqref{eq:YMagainstseq} and letting $\eps\to0$ (the dependence on $m_\eps$ will have been removed), we conclude that,
\begin{align*}
&\frac12 \int_\Omega\left[F_{yy}(x)\psi(x)\cdot\psi(x) + 2 F_{yz}(x)\psi(x)\cdot\nabla\psi(x) + \int F_{zz}(x)z\cdot z \dv{}\nu_x(z)\right]\dx\\
\leq &\liminf_{k\to\infty} \int_\Omega f_k(x)\dx,
\end{align*}
concluding the proof of Claim 4. 
\vspace{0.2cm}

\noindent
To prove \eqref{eq:lastbeforemain1}, we introduce the notation
\[
L(ty,tz)[y,z]=L(x,u_0(x)+ty,\nabla u_0(x) + tz)[(y,z),(y,z)]
\]
and note that
\[
\underset{\{|\nabla\psi_k|< m\}\cap\{|\nabla b_k|< m\}}{\int} f_k(x) \dx = I^k_1 + I^k_2 + I^k_3 + I^k_4,
\]
where
\begin{align*}
I^k_1 & = \int_\Omega \mathbbm{1}_{\{|\nabla\psi_k|< m\}\cap\{|\nabla b_k|< m\}} \int_0^1 (1-t) \left[L(t\alpha_k\psi_k, t\alpha_k\nabla\psi_k) - L(0,0)\right][\psi_k,\nabla\psi_k] \dt\dx; \\
I^k_2 & = \frac12 \int_{\{|\nabla\psi_k|<m\}} L(0,0)[\psi_k,\nabla\psi_k] \dx;\\
I^k_3 & = -\frac12 \int_{\{|\nabla\psi_k|<m\}} L(0,0)[\psi_k,\nabla\psi_k] \left(1-\mathbbm{1}_{\{|\nabla b_k|<m\}}\right)\dx; \\
I^k_4 & = - \int_\Omega \mathbbm{1}_{\{|\nabla\psi_k|< m\}\cap\{|\nabla b_k|< m\}}\int_0^1 (1-t) L(t\alpha_k b_k,t\alpha_k\nabla b_k)[b_k,\nabla b_k]\dt\dx. \nonumber
\end{align*}
The term $I^k_2$ is precisely the one appearing on the left-hand side of \eqref{eq:lastbeforemain1} and it thus suffices to prove that $I^k_1$, $I^k_3$ and $I^k_4$ all converge to $0$ as $k\to\infty$.
It is clear, by the Dominated Convergence Theorem, that since $\alpha_k\rightarrow 0$, $I^k_1\to0$ as $k\to\infty$. Regarding the term $I^k_4$, note that the sequence of functions
\begin{equation*}
L(t\alpha_k b_k,t\alpha_k\nabla b_k)[\nabla b_k,\nabla b_k]\mathbbm{1}_{\{|\nabla b_k|<m \}\cap\{ |\nabla \psi_k|<m \}}
\end{equation*}
is bounded in $\LL^\infty(\Omega)$ for all $t\in[0,1]$ (recall  $\alpha_k b_k\to0$ uniformly) and, therefore, it is equiintegrable. In addition, this sequence converges to $0$ in measure because $\nabla b_k\rightarrow 0$ in measure and, by [H0], all partial derivatives of second order of $F$ are continuous. These two facts imply, by Vitali's Convergence Theorem, that $I^k_4\to 0$ as $k\to\infty$.
\vspace{0.2cm}

\noindent
Furthermore, given that $\nabla b_k\rightarrow 0$ in measure and $(|\psi_k|)$ is equiintegrable, we also have that 
\[
|I^k_3|\leq cm\int_\Omega |\psi_k|\left(1- \mathbbm{1}_{\{|\nabla b_k|<m \}}   \right)\dx\rightarrow 0,\mbox{ as }k\to\infty. 
\]
Hence, \eqref{eq:lastbeforemain1} follows and, consequently, Claim 4.  
\vspace{0.2cm}

\noindent{\bf Step 5.} In this last step, we show that the inequality obtained in Claim 4 together with the strict positivity of the second variation, leads to a contradiction. 
\vspace{0.2cm}

\noindent
By Lemma \ref{lemma:F''QC} (see also Remark \ref{remark:F''QC}), for any $x\in\Omega$, the function $H(x,z) = F_{zz}(x)z\cdot z - 2c_0|z - \bar{\nu}_{x}|^2$ is quasiconvex at every $z\in\R^{N\times d}$, where $\bar{\nu}_x = \nabla\psi(x)$ a.e.~in $\Omega$. In particular, since $(\nu_x)_x$ is a gradient Young measure and $H(x,\cdot)$ has quadratic growth, Jensen's inequality from the characterisation of gradient Young measures\footnote{See, for example, \cite{KPCGYM} or \cite[Lemma 5.1]{RindlerCV}.} implies that for a.e. $x\in\Omega$,
\begin{equation}\label{eq:fzzqc}
\frac12 F_{zz}(x)\nabla\psi(x)\cdot \nabla\psi(x) + c_0\int_{\R^{N\times d}} |z -\nabla\psi(x)|^2\dv\nu_x(z) \leq  \frac12 \int_{\R^{N\times d}} F_{zz}(x) z\cdot z \dv\nu_x(z).
\end{equation}
On the other hand, in Step 4 we showed that
\[
\int_\Omega\left[F_{yy}(x)\psi\cdot\psi + 2 F_{yz}(x)\psi\cdot\nabla\psi + \int F_{zz}(x)z\cdot z \,{\rm d}\nu_x(z)\right]\dx\leq 0
\]
and, by \eqref{eq:fzzqc}, we may hence deduce that
\begin{align*}
&\frac12 \int_\Omega\left[F_{yy}(x)\psi\cdot\psi + 2 F_{yz}(x)\psi\cdot\nabla\psi + F_{zz}(x)\nabla\psi\cdot\nabla\psi\right]\dx \\
& \quad+ c_0\int_\Omega\int |z-\nabla\psi|^2\,{\rm d}\nu_x(z)\dx\leq  0.
\end{align*}
However, the second variation has been assumed strongly positive (see assumption $(II)$ in Theorem \ref{thm:sufficiency}), so that the above inequality implies
\[
\frac{c_0}{2}\int_\Omega |\nabla\psi|^2\dx +c_0 \int_\Omega\int |z-\nabla\psi|^2\,{\rm d}\nu_x(z)\dx\leq 0,
\]
i.e. $\nabla\psi=0$, $\psi=0$ (by Poincar\'e's inequality) and $\nu_x = \delta_0$ a.e. in $\Omega$. In particular, we infer that $\nabla\psi_k\rightharpoonup 0$ in $\LL^2(\Omega,\R^N)$ and that $\nabla\psi_k\rightarrow 0$ in measure (since the generated measure is an elementary Young measure, see \cite[Lemma 4.12]{RindlerCV}).
\vspace{0.2cm}

\noindent
To conclude the proof, we first note that 
\begin{equation*}
\alpha_k^\frac{p-2}{p}\psi_k= \beta_k\alpha_k^{-\frac{2}{p}}\eta_k\psi_k.
\end{equation*}
Hence, for a subsequence that we do not relabel, we may use (\ref{betapbyalpha2bded}) to further deduce that $\alpha_k^{\frac{p-2}{p}}\psi_k \rightharpoonup 0$ in $\W^{1,p}(\Omega,\R^N)$. Then, it is clear that
\begin{itemize}
\item $\alpha_k^{-1}S(\alpha_k\psi_k)\longrightarrow 0$ in $\LL^2$ and
\item $\alpha_k^{-1}S(\alpha_k\nabla \psi_k)$ is bounded in $\LL^2$.
\end{itemize}
Whereby, we may apply Proposition \ref{global_garding} with $h_k=\alpha_k\psi_k=\varphi_k$ and $a_k=\alpha_k$ to estimate
\begin{align*}
\frac{c_0}{2}=&\liminf_{k\rightarrow\infty}\frac{c_0}{2}\underset{\Omega}{\int}|\nabla\psi_k|^2\dx\notag\\
\leq&\liminf_{k\rightarrow\infty}\frac{c_0}{2}\underset{\Omega}{\int}\left(|\nabla\psi_k|^2+\alpha_k^{p-2}|\nabla\psi_k|^p\right)\dx\notag\\
\leq & \liminf_{k\rightarrow\infty}\alpha_k^{-2}G(x,\varphi_k,\nabla\varphi_k)\dx\leq 0.
\end{align*}
\noindent
Since $c_0>0$, this is a contradiction and the proof of Theorem \ref{thm:sufficiency} is complete. 

\end{proof}

\section{The case of domains locally diffeomorphic to a cone}\label{sec:AppendDiffeom}

\noindent In this section we establish the proof of Theorem \ref{zhanglemma} for the general case in which $\Omega$ is locally diffeomorphic to a cone. 

\begin{proof}[Proof of Theorem \ref{zhanglemma} (general case)]
Observe first that, by compactness of $\partial\Omega$, there are a finite set of points $\{y_1,...,y_M\}\subseteq \partial\Omega$ and radii ${r}_1,...,{r}_M>0$ such that 
\begin{equation*}
\partial\Omega\subseteq\bigcup_{j=1}^M B\left(y_j,\frac{{r}_j}{2}\right)
\end{equation*}
and for which there exist cones $ \mathcal{C}_1,..., \mathcal{C}_M$ and orientation-preserving diffeomorphisms  \linebreak $g_j\colon\overline{\Omega(y_j,{r}_j)}\rightarrow \mathcal{D}_j\subseteq \overline{\mathcal{C}_j}$ with $g_j(\partial\Omega\cap \overline{B(y_j,{r}_j)})\subseteq \partial \mathcal{C}_j$.  
Hence, for $x_*\in\Gamma_N$ arbitrary, $x_*\in B\left(y_j,\frac{{r}_j}{2}\right)$ for some $1\leq j\leq M$.

Assume temporarily that, for any $\delta>0$, there exist some $R_0>0$ independent of $x_*$, a conical boundary region $D_*$ and a diffeomorphism $\Phi\colon D_*\mapsto \overline{\Omega(x_j,r_j)}$, such that
\begin{eqnarray}
&\Phi(x_*)=x_*;\\
&\left\|\Phi-\mathrm{Id}_{D_*}\right\|_{{\rm C}^1(D_*\cap B(x_*,R_0),\R^d)}+\|\det \nabla\Phi-1\|_{{\rm C}^0(D_*\cap B(x_*,R_0))}<\delta;\label{eqhomeom}\\
&B(x_*,R_1)\subseteq \Phi(B(x_*,R_0))\label{eqhomeom2},
\end{eqnarray}
where $R_1>0$ is a positive radius that does not depend on $x_*$.

Under this assumption, for a fixed $\varepsilon>0$, we use Lemma \ref{LemmagrowthG} (b)-(c) to find $0<\delta<1$ and $R_\varepsilon>0$ such that the inequality
\begin{align}
&\left|G(x_0,0,z\, J)-c_0|S(z\, J)|^2-\left(G(x,y,z)|\det J|-{c_0}|S(z)|^2|\det J|\right) \right|\notag\\
\leq& \,c|y|^2|\det J|+ \varepsilon |S(z)|^2|\det J|\label{mainestimate}
\end{align}
is satisfied whenever $x\in \Omega(x_0,R_\varepsilon)$ and $|y|+|J-\Irm_d|<\delta$. For such a $\delta>0$, we may take $R_0$ such that ($\ref{eqhomeom}$)-\eqref{eqhomeom2} hold. We further assume that $\left|\left|\Phi\right|\right|_{{\rm C}^1(D_*\cap B(x_*,R_0),\R^d)}\leq 2$ and we set 
\[R:=\frac{1}{4}\min\{R_\varepsilon,R_1, {r}_j,\mathrm{diam}(\Omega)\st 1\leq j\leq M\}.
\]
In order to show that $u_0$ indeed satisfies \eqref{eq:minimality} of Theorem \ref{zhanglemma}, we consider the following cases. 
\\
\textit{Case 1.} If $\overline{\Omega(x_0,R)}\cap \Gamma_N\neq\emptyset$, we choose $x_* \in \overline{\Omega(x_0,R)}\cap \Gamma_N$ such that $x_*$ is a  vertex of the cone into which, by assumption, $\Omega(x_0,R)$ can be mapped under a diffeomorphism. 
Whereby, $\Omega(x_0,R)\subseteq\Omega(x_*,2R)$. Furthermore, if $x_*\in \Omega(y_J,\frac{{r_J}}{2})$,  then $\Omega(x_*,2R)\subseteq \Omega(y_J,\frac{{r_J}}{2})$.

We now consider $\Phi$ as above. Then, for any  $\xi\in \Phi^{-1}(\Omega(x_0,R))$, we have $|\Phi(\xi)-x_0|<R$ and, consequently, $|\Phi(\xi)-x_*|<R_1$. Hence, by \eqref{eqhomeom2}, $|\xi-x_*|<R_0$. Then, if $\varphi\in\overline{\mathrm{Var}(\Omega(x_0,R),\R^N)^{\W^{1,p}}}$ satisfies
\begin{equation}
\|\varphi\|_{\LL^\infty(\Omega(x_0,R),\R^N)}<\delta
\end{equation}
with $\delta>0$ as above, in virtue of \eqref{eqhomeom} we can substitute $x=\Phi(\xi)$, $y=\varphi(\Phi(\xi))$,  $z:=\nabla\varphi(\Phi(\xi))$ and $J:=\nabla\Phi(\xi)$ in \eqref{mainestimate} to obtain, after integrating over $\Phi^{-1}(\Omega(x_0,R))$,

\begin{align}
&\int_{\Phi^{-1}(\Omega(x_0,R))}G(x_0,0,\nabla\varphi(\Phi(\xi)) \nabla\Phi(\xi))\dv \xi\notag\\
&\, -c_0\int_{\Phi^{-1}(\Omega(x_0,R))}|S(\nabla\varphi(\Phi(\xi)) \nabla\Phi(\xi))|^2\dv\xi\notag\\
&-\int_{\Phi^{-1}(\Omega(x_0,R))}G(\Phi(\xi),\varphi(\Phi(\xi)),\nabla\varphi(\Phi(\xi)))|\det \nabla\Phi(\xi)|\dv \xi\notag\\
&+ c_0\int_{\Phi^{-1}(\Omega(x_0,R))}\left|S\left(\nabla\varphi(\Phi(\xi))\right)\right|^2|\det \nabla\Phi(\xi)|\dv \xi\notag\\
\leq & \,c \int_{\Phi^{-1}(\Omega(x_0,R))}\left|\varphi(\Phi(\xi))\right|^2|\det \nabla\Phi(\xi)|\dv\xi\notag\\
&+   \varepsilon\int_{\Phi^{-1}(\Omega(x_0,R))}|S(\nabla\varphi(\Phi(\xi))|^2|\det \nabla\Phi(\xi)|\dv \xi.\label{goodineq}
\end{align}
We are interested in using the quasiconvexity at the boundary in order to simplify the above expression. With this aim, we define $\tilde{\varphi}\colon \Phi^{-1}(\Omega(x_0,R)) \rightarrow\R^N$ as
\begin{equation*}
\tilde{\varphi}(\xi):=\varphi\circ\Phi(\xi).
\end{equation*}

We wish to establish that $\tilde{\varphi}$ is a suitable test function for the quasiconvexity at the boundary condition that we have imposed. Indeed, by construction, $D:= \Phi^{-1}(\Omega(x_0,R))$ is a standard conical-boundary region and $\tilde{\varphi}(x)=0$ for all $x\in\partial D\cap\, \mathrm{int}\mathcal{C}$, so that the quasiconvexity at the boundary can be applied at $x_*$ with this test function  by Remark \ref{remark:p-quasiconvexity} and Lemma \ref{lemma:qcindependent}. 

Noting that $\nabla\tilde{\varphi}(\xi)= \nabla\varphi(\Phi(\xi)) \nabla\Phi(\xi)$, the quasiconvexity at the boundary condition reads 
\begin{align}
0 \leq & \int_{\Phi^{-1}(\Omega(x_0,R))}G(x_0,0,\nabla(\varphi\circ\Phi)(\xi) )\dv\xi
 -{c_0}\int_{\Phi^{-1}(\Omega(x_0,R))}|S(\nabla(\varphi\circ\Phi)(\xi)) |^2\dv\xi.
\label{QCatbdrywithGII}
\end{align}
From inequalities \eqref{goodineq} and  \eqref{QCatbdrywithGII}, we infer that
\begin{align}
&-\int_{\Phi^{-1}(\Omega(x_0,R))}G(\Phi(\xi),\varphi\circ\Phi(\xi),\nabla\varphi(\Phi(\xi))|\det \nabla\Phi(\xi)|\dv\xi\notag\\
&+ c_0\int_{\Phi^{-1}(\Omega(x_0,R))}|S\left(\nabla\varphi(\Phi(\xi))\right)|^2|\det \nabla\Phi(\xi)|\dv\xi\notag\\
\leq & \,\varepsilon\int_{\Phi^{-1}(\Omega(x_0,R))}|S(\nabla\varphi(\Phi(\xi)))|^2|\det \nabla\Phi(\xi)|\dv\xi\notag\\
&+c\int_{\Phi^{-1}(\Omega(x_0,R))}|\phi(\Phi(\xi))|^2|\det \nabla\Phi(\xi)|\dv\xi.\notag
\end{align}

Applying the change of variables $x=\Phi(\xi)$, this leads to
\begin{align}
&-\int_{\Omega(x_0,R)}G(x,\varphi(x),\nabla\varphi(x))+ c_0|S\left(\nabla\varphi(x)\right)|^2\dx\notag\\
\leq&  \,\varepsilon\int_{\Omega(x_0,R)}|S(\nabla\varphi(x))|^2\dx+c\int_{\Omega(x_0,R)}|\varphi(x)|^2\dx.\label{bestineq}
\end{align}
Also, since $\varphi\in\overline{\mathrm{Var}(\Omega(x_0,R),\R^N)^{\W^{1,p}}}$ and $u_0$ is an $F$-extremal, 
\begin{equation*}
\int_{\Omega(x_0,R)}F_y(x,u_0(x),\nabla u_0(x))\cdot\varphi(x)+ F_z(x,u_0(x),\nabla u_0(x))\cdot \nabla\varphi(x)\dx=0.
\end{equation*}
This, together with \eqref{bestineq}, imply for $\varepsilon=\frac{c_0}{2}$ that 
\begin{align*}
&\int_{\Omega(x_0,R)}\left(F(x,u_0(x)+\varphi(x),\nabla u_0(x)+\nabla\varphi(x))-F(x,u_0(x),\nabla u_0(x))\right)\dx\notag\\
& -c_0\int_{\Omega(x_0,R)}|S\left(\nabla\varphi(x)\right)|^2\dx+ c\int_{\Omega(x_0,R)}|\varphi(x)|^2\dx \notag\\
\geq & \,-\frac{c_0}{2}\int_{\Omega(x_0,R)}|S\left(\nabla\varphi(x)\right)|^2\dx,
\end{align*}
which gives the desired inequality after adding $c_0\int_{\Omega(x_0,R)}|S\left(\nabla\varphi(x)\right)|^2\dx$ to both sides of the above expression. 
This concludes the proof of ($\ast$) for $x_0$ in a neighbourhood of $\Gamma_N$. 
\\
\textit{Case 2.} To prove ($\ast$) if $x_0\in\overline{\Omega}$ is not in the neighbourhood of radius $R$ of $\Gamma_N$, a simpler version of the above proof works, since we can then use that the standard quasiconvexity holds in $\overline{\Omega}$ and take $\Phi$ as the identity diffeomorphism in the above proof, given that there is no need, in this case, to transform the boundary into a subset of a cone. All other calculations follow in the exact same way.
\\
To conclude the proof of the theorem, it remains to show that the diffeomorphism $\Phi$ can be constructed with the required properties. We emphasize that the role of $\Phi$ is to locally ``blow-up" the boundary of $\Omega$ by smoothly mapping it into the corresponding cone. 

We observe that, for $x_*\in\Gamma_N$ arbitrary, $x_*\in B(y_j,r_j)$ for some $1\leq j\leq M$. Recall that $\mathcal{D}_j=g_j[\overline{\Omega(y_j,{r}_j)}]$ and write
\[
A:=\nabla g_j(x_*)\in\R^{d\times d} \mbox{ \,\,\,and \,\,\,}D_*:=A^{-1}\mathcal{D}_j-A^{-1}  g_j(x_*)+x_*\subseteq\R^d.
\] 

Consider the diffeomorphism $\Phi\colon D_*\rightarrow \overline{\Omega(x_j,{r}_j)}$ given by
\begin{equation}\label{defPhi1}
\Phi(\xi):=g_j^{-1}\left( A(\xi-x_*)+g_j(x_*)\right).
\end{equation}  
Clearly, 
\begin{equation*}
\Phi^{-1}(x)=A^{-1}\cdot \left(g_j(x)-g_j(x_*)\right)+x_*
\end{equation*}
and, hence, $\Phi$ is well defined. 
We then observe that, for $\xi\in D_*$, 
\begin{align}
&|\Phi(\xi)-\xi|+|\nabla \Phi(\xi)-\Irm_d|\notag\\
=&|g_j^{-1}(A(\xi-x_*)+g_j(x_*))-g_j^{-1}(g_j(\xi))|\notag\\
+&\left| \left[\nabla g_j^{-1} (A(\xi-x_*)+g_j(x_*) )-\nabla g_j(x_*)^{-1}\right]\nabla g_j(x_*)   \right|\notag\\
=&|g_j^{-1}(A(\xi-x_*)+g_j(x_*))-g_j^{-1}(g_j(\xi))|\notag\\
+&\left| \left[\nabla g_j^{-1} (A(\xi-x_*)+g_j(x_*) )-\nabla g_j^{-1}(g_j(x_*))\right]\nabla g_j(x_*)   \right|\notag\\
\leq& c\omega_0^j(|A||\xi-x_*|+|g_j(x_*)-g_j(\xi)|)+c|A|\omega_1^j(|A||\xi-x_*|),
\end{align}
where $\omega_0^j$ and $\omega_1^j$ are moduli of continuity of $g_j^{-1}$ and $\nabla g_j^{-1}$ respectively.

Since $g_j$ is continuous over the compact set $\overline{\Omega(y_j,{r}_j)}$ and the set $\{g_1,...,g_M\}$ is finite, 
this implies that, given $\delta>0$, there exists some $R_0>0$, that does not depend on $x_*$, such that (\ref{eqhomeom}) is satisfied. 

By uniform continuity and the fact that $\Phi(x_*)=x_*$, we can further find $R_1>0$, independent of $x_*$, such that (\ref{eqhomeom2}) also holds. 

\end{proof}


 \section{Appendix: Proof of the technical lemma}\label{Appendix}
 
For the ease of the reader, we restate Lemma \ref{LemmagrowthG} which we prove in this section. We remark that its proof is motivated by the truncation strategy originated in \cite{AcFus}. 

\begin{customthm}{4.4}
Let $\Omega\subset\R^d$ be a Lipschitz domain. Assume further that $F\colon\overline{\Omega}\times \R^N\times \R^{N\times d}\rightarrow\R$ is a continuous integrand, $u_0\in {\rm C}^1(\overline{\Omega},\R^N)$ and that [H0], [H1] and [UC] hold. 
Define the function $G\colon\overline{\Omega}\times \R^N\times \R^{N\times d}\rightarrow\R$  by 
\begin{align*}
G(x,y,z):=&F(x,u_0(x)+y,\nabla u_0(x)+z)-F(x) -F_y(x)\cdot y-F_z(x)\cdot z\notag\\
=&\int_0^1 (1-t)L(x, u_0(x)+ty,\nabla u_0(x)+tz)[(y,z),(y,z)],
\end{align*}
where the bilinear form $L(x,v,w)$ is given by 
\begin{align*}
L(x,v,w)[(y,z),(\hat{y},\hat{z})]:= &F_{yy}(x,v,w)y\cdot \hat y+ F_{yz}(x,v,w)y\cdot z+F_{yz}(x,v,w)\hat{y}\cdot \hat{z} \\&+F_{zz}(x,v,w)z\cdot \hat{z}.
\end{align*}

The following hold:

\begin{itemize}
\item[(a)] for each $x\in\overline{\Omega}$, $y,\hat y\in\R^N$ and $z,\hat z\in\R^{N\times d}$ and for some locally bounded function $C(y,\hat y)$ on $\R^{2N}$, 
\[
|G(x,y,z) - G(x,\hat y,\hat z)|\leq C(y,\hat y)\left(A_{p-1}(y,z,\hat y,\hat z)|z-\hat z| + A_p(y,z,\hat y,\hat z)|y-\hat y|\right),
\]
where
\[
A_p(y,z,\hat y,\hat z) = |y|+|\hat y|+|z|+|\hat z|+|z|^p+|\hat z|^p.
\]
In particular, for some locally bounded function $C(y)$,
\[
|G(x,y,z)|\leq  C(y)\left(|S(y)|^2+|S(z)|^2\right);
\] 
\item[(b)]  for every $\varepsilon>0$ there exist $R=R(\varepsilon)>0$ and $\delta=\delta(\varepsilon)>0$ such that, for all $x_0,x\in\overline{\Omega}$, $z\in\R^{N\times d}$ and $J\in\R^{d\times d}$, if $|x-x_0|<R$,  $w=z\, J$ and $|y|+|J-\mathrm{I}_d|<\delta$, then
\begin{equation*}
|G(x_0,0,w)-G(x,y,z)|\det J||<c|y|^2|\det J|+\varepsilon |S(z)|^2|\det J|
\end{equation*}
for some constant $c>0$ where $\mathrm{I}_d$ denotes the $d\times d$ identity matrix;
\item[(c)] for every $\varepsilon>0$ there exists $\delta=\delta(\varepsilon)>0$ such that, for  all $z\in\R^{N\times d}$ and $J\in\R^{d\times d}$, if  $w=z\, J$ and $|J-\mathrm{I}_d|<\delta$, then 
\begin{align*}
c_0\left||S(z\, J)|^2-|S(z)|^2|\det J|\right|< & \,{\varepsilon} |S(z)|^2.
\end{align*}
 \end{itemize}
\end{customthm}

\begin{proof}

To prove (a), by the triangle inequality we estimate
\begin{equation*}
|G(x,y,z)-G(x,\hat y, \hat z)|\leq |G(x,y,z)-G(x,\hat y, z)|+|G(x,\hat y, z) - G(x, \hat y, \hat z)|=:\mathrm{I+II}.
\end{equation*}
We next use a truncation strategy and consider two cases.

\textit{Case 1:} $|y|+|\hat y|+|z|+|\hat z|\leq 1$.
The approach that we follow is based on the quadratic behaviour of $G$ for small values of $|y|+|\hat y|+|z|+|\hat z|$. 
\begin{align*}
\mathrm{I}  
\leq& \left|\int_0^1  \left(F_y(x,u_0(x)+\hat y+t(y-\hat y),\nabla u_0(x)+z)-F_y (x,u_0(x) ,\nabla u_0(x)+z)  \right)\cdot(y-\hat y)\dt \right|\\
&+\left|\int_0^1  \left(F_y(x,u_0(x),\nabla u_0(x)+z)-F_y (x,u_0(x),\nabla u_0(x))  \right)\cdot(y-\hat y)\dt \right|\\
\leq& \int_0^1\int_0^1 \left|F_{yy}(x,u_0(x)+st(y-\hat y),\nabla u_0(x)+z)(t(y-\hat y))\cdot (y-\hat y)\right|\dv s\dt\\
&+\int_0^1\int_0^1 \left|F_{zy}(x,u_0(x),\nabla u_0(x)+sz)z\cdot (y-\hat y)\right|\dv s\dt\\
\leq& \,c\left( |y|+|\hat y|+|z|+|\hat z|   \right)|y-\hat y|.
\end{align*}
Moreover, 
\begin{align*}
\mathrm{II}  
\leq& \int_0^1\int_0^1 \left|F_{yz}(x,u_0(x)+s \hat y,\nabla u_0(x)+\hat z+t(z-\hat z))\hat y\cdot (z-\hat z)\right|\dv s\dt\\
&+\int_0^1\int_0^1 \left|F_{zz}(x,u_0(x),\nabla u_0(x)+s\hat z) \hat z\cdot (z-\hat z)\right|\dv s\dt\\
\leq& \,c\left( |y|+|\hat y|+|z|+|\hat z|   \right)|z-\hat z|.
\end{align*}
We emphasize that the last inequality in each of the two estimates above relies only on condition [H0] and the fact that $u_0$ and $\nabla u_0$ are uniformly bounded. 

\textit{Case 2:} $|y|+|\hat y|+|z|+|\hat z|> 1$. 
By the triangle inequality, the Fundamental Theorem of Calculus and [H1] (b), we obtain that
\begin{align*}
\mathrm{I}\leq &  \int_0^1 \left| F_y(x,u_0(x)+\hat y+ t(y-\hat y), \nabla u_0(x)+z)\cdot(y-\hat y)\right|\dt +\left|F_y(x)\cdot(y-\hat y)\right|\\
\leq & \, C(y,\hat y)(|y|+|\hat y|+|z|+|\hat z|+|z|^p)||y-\hat y|.
\end{align*}
Similarly, we can deduce that 
\begin{align*}
\mathrm{II}\leq &  \int_0^1 \left| F_z(x,u_0(x)+\hat y, \nabla u_0(x)+z+t(z-\hat z))\cdot(z-\hat z)\right| \dt +\left|F_z(x)\cdot(z-\hat z)\right|\\
\leq & C(y,\hat y)(|y|+|\hat y|+|z|+|\hat z|+|z|^{p-1}+|\hat z|^{p-1})||z-\hat z|.
\end{align*}
This concludes the proof of (a), after using the definition of $A_p$.

To prove (b), fix $\varepsilon>0$ and estimate as
\begin{align*}
|G(x_0,0,w)-G(x,y,z)|\leq & |G(x_0,0,w)-G(x_0,0,z)|+|G(x_0,0,z)-G(x,0,z)|\\ 
&+|G(x,0,z)-G(x,y,z)|
=:\mathrm{I}+\mathrm{II}+\mathrm{III}. 
\end{align*}

To estimate term $\mathrm{I}$, note that $w = zJ$ and we may assume that $|J -{\rm I}_d|<1$ so that $J\leq c(d)$. Then, setting $y = \hat y = 0$ in part (a) we deduce that, for $\delta$ small enough, 
\begin{align*}
\mathrm{I} = |G(x_0,0,w)-G(x_0,0,z)| & \leq c \big(|z| + |w| + |z|^{p-1}+|w|^{p-1}\big)|z-w|\\
& \leq c  \big(|z|+ |z|^{p-1}\big)|J-{\rm I}_d||z|\\
& < {\varepsilon} |S(z)|^2 .
\end{align*}
To estimate term $\mathrm{III}$, we may take $|y|<1$ and use part (a) again with $\hat z = z$ and $\hat y = 0$, to find that
\begin{align*}
\mathrm{III} = |G(x,0,z)-G(x,y,z)| & \leq c \big(|y| + |z| + |z|^{p}\big)|y|\\
& \leq c \big(|y|^2 + |y||z|+|y| |z|^{p}\big)\\
& \leq c \big((1+C_{\varepsilon})|y|^2 + \frac{\varepsilon}{c} |S(z)|^2\big)\\
& = c|y|^2 + {\varepsilon}|S(z)|^2 ,
\end{align*}
where the last inequality follows by setting $|y|<\delta < \varepsilon/c$ and Young's inequality. The estimate for $\mathrm{II}$ follows a similar strategy to the proof of (a). In particular, for $|z|\leq 1$,
\begin{align*}
|G(x_0,0,z)-G(x,0,z)| \leq & \int_0^1\left|F_{zz}(x,u_0(x),\nabla u_0(x)+tz)-F_{zz}(x_0,u_0(x_0),\nabla u_0(x_0)+tz)\right||z|^2\dt\notag\\
\leq &c\tilde{\omega}_0(|x-x_0|)|z|^2,
\end{align*}
where $\tilde{\omega}_0$ is a modulus of continuity depending, in this case, also on the ${\rm C}^1$ function $u_0$. 

On the other hand, for $|z|>1$, we observe that
\begin{align*}
|G(x_0,0,z)-G(x,0,z)| \leq& |F(x,u_0(x),\nabla u_0(x)+z)-F(x_0,u_0(x_0),\nabla u_0(x_0)+z)|\notag\\
&+|F(x_0,u_0(x_0),\nabla u_0(x_0))-F(x,u_0(x),\nabla u_0(x))|\notag\\
&+\left|F_z(x,u_0(x),\nabla u_0(x))-F_z(x_0,u_0(x_0),\nabla u_0(x_0))\right||z|.
\end{align*}
From the uniform continuity assumption [UC] and the continuity of $F_z$ and $\nabla u_0$, it is clear that there exists an $R=R(\varepsilon)>0$ such that if $|x-x_0|<R$, it holds that
\begin{equation}
\mathrm{II} \leq \frac{\varepsilon}{2}(1+|z|^p)+\frac{\varepsilon}{2}|z|\leq \varepsilon |S(z)|^2.
\end{equation}
The two estimates together imply that there exists an $R=R(\varepsilon)>0$ such that if $|x-x_0|<R$ and $z\in \mathbb{R}^{N\times d}$
\[
\mathrm{II} = |G(x_0,0,z)-G(x,0,z)| < \varepsilon |S(z)|^2.
\]
We thus deduce that, given $\varepsilon > 0$, we can find $R=R(\varepsilon)>0$ and $\delta = \delta(\varepsilon)>0$ such that for all $x_0,x\in \overline{\Omega}$ with $|x - x_0|<R$ and for all $z\in \mathbb{R}^{N\times d}$ and $J\in\mathbb{R}^{d\times d}$ with $|y| + |J - {\rm I}_d|<\delta$
\[
|G(x_0,0,w)-G(x,y,z)| = \mathrm{I} + \mathrm{II} + \mathrm{III} < c |y|^2 + \varepsilon |S(z)|^2.
\]
The conclusion of (b) follows from this and the continuity of the determinant.

Finally, to prove (c) we observe that, for any given $C,\varepsilon>0$ there is a $\delta=\delta(\varepsilon)\in (0,\min\{\frac{\varepsilon}{2},1\})$ such that, if $|J-\mathrm{I}_d|<\delta$ with $J\in\R^{d\times d}$, we can ensure that $|J-\mathrm{I}_d|+|1-|\det(J)||<\frac{\varepsilon}{4C}$. We now estimate, for any $z\in\R^{N\times d}$ and $J\in\R^{d\times d}$ with $|J-\mathrm{I}_d|<\delta$ as above, that
\begin{align}
{c_0}\left||S(z\, J)|^2-|S(z)|^2|\det J|\right|\leq & \,{c_0}\left[\left||S(z\, J)|^2-|S(z)|^2\right|+|S(z)|^2|1-|\det J||\right]\notag\\
&\quad+C|S(z)|^2|1-|\det J|| \label{prelimauxineq}\\
\leq &\, C\left[ |S(z)|^ 2|J-\mathrm{I}_d|+|S(z)|^2|1-|\det J||\right]\notag \\
< & \,{\varepsilon} |S(z)|^2.
\end{align}
We remark that inequality (\ref{prelimauxineq}) follows from the Lipschitz properties of the function $S$, since $|t^p-s^p|\leq c(p)(t^{p-1}+s^{p-1})|t-s|$ for $t,s\geq 0$, $p\geq 1$.  Note also that we are using $|J|\leq C$, since $|J-\mathrm{I}_d|<\delta<1$.  This concludes the proof.

\end{proof}

\section{Acknowledgements}
The authors wish to thank John Ball and Jan Kristensen for useful discussions that contributed to the ideas in this paper. They also thank the anonymous referees whose careful reviews and comments contributed to improve the present work. J.C.C.  is grateful for the financial support granted by the Sofia Kovalevskaia Foundation - Mexican Mathematical Society and for the hospitality of the Mathematics Departments of the University of Augsburg and the University of Sussex, where this project was, respectively, started and concluded.



\end{document}